\newtheorem{lem}{Lemma}
\newtheorem{theo}{Theorem}
\newtheorem{rem}{Remark}
\newtheorem{prop}{Proposition}
\newtheorem{coro}{Corollary}
\newcommand{\abs}[1]{\left| #1\right|}
\newcommand{\ind}{ 1 }
\newcommand{\exspace}{{\mathcal{C}_{0\to 0}}}
\newcommand{\dd}{\text{{\rm d}}}
\newcommand{\ds}{\displaystyle}
\begin{document}
\author{
Arnaud Gloter\thanks{ Universit\'e d'\'Evry Val d'Essonne, D\'epartement de Math\'ematiques, 91025 \'Evry Cedex, France.}
\thanks{ 
This research benefited from the support of the 'Chair Risque de cr\'edit', F\'ed\'eration Bancaire Fran\c{c}aise. }
\and
Miguel Martinez \thanks{Universit\'e Paris-Est, Laboratoire d'Analyse et de
Math\'ematiques Appliqu\'ees, UMR $8050$, 5  Bld Descartes,
Champs-sur-marne, 77454 Marne-la-Vall\'ee Cedex 2, France. } 
}
\title{Distance between two skew Brownian motions as a S.D.E. 
with jumps
and law of the hitting time}
\maketitle

\begin{abstract}
In this paper, we consider two skew Brownian motions, driven by the same Brownian motion, with different starting points and different skewness coefficients. We show that
we can describe the evolution of the distance between the two processes with a stochastic differential equation.
This S.D.E. possesses a jump component driven by the excursion process of one of the two skew Brownian motions. 
Using this representation, we show that the local time of two skew Brownian motions at their first hitting
time is distributed as a simple function of a Beta random variable.
 This extends a result by 
Burdzy and Chen \cite{BurChe01}, where the law of coalescence of two skew Brownian motions with the same skewness coefficient is computed.

\textbf{MSC $2000$}. Primary: 60H10, Secondary: 60J55 60J65.

\textbf{Key words}: Skew Brownian motion; Local time; Excursion process; Dynkin's formula.
\end{abstract}


\section{Presentation of the problem}
Consider $(B_t)_{t \ge 0}$ a standard Brownian motion on some filtered probability space
$(\Omega,\mathcal{F}, (\mathcal{F}_t)_{t \ge 0}, \mathbb{P})$  where the filtration satisfies the usual right continuity and completeness conditions. Recall that the skew Brownian motion $X^{x,\beta}$
is defined as the solution of the stochastic differential equation with singular drift coefficient,
\begin{equation}
\label{E:def_skew}
X^{x,\beta}_t = x+ B_t + \beta L^0_t(X^{x,\beta}),
\end{equation}
where $\beta \in (-1,1)$ is the skewness parameter, $x \in \mathbb{R}$, 
and $L^0_t(X^{x,\beta})$ is the symmetric local time at $0$:
\begin{equation*}
L^0_t(X^{x,\beta})= \lim_{\varepsilon \to 0} \frac{1}{2 \varepsilon}
\int_0^t \ind_{ [-\varepsilon , \varepsilon ]}(X_s^{x,\beta})  d s.
\end{equation*}
It is known that a strong solution of the equation \eqref{E:def_skew} exists, 
and pathwise uniqueness holds as well 
(see \cite{BasChe05}, \cite{HarShe81}). 
Remark that in \cite{BurChe01} it is shown that $X^{x,\beta}$ can be obtained as the limit
of diffusion processes $X^{x,\beta,n}$ with smooth coefficients. Indeed, if one mollifies the singularity
due to the local time, the following diffusion processes can be defined,
\begin{equation*}
X^{x,\beta,n}_t = x + B_t + \frac{1}{2} \log (\frac{1+\beta}{1-\beta}) \int_0^t n \phi ( n X^{x,\beta,n}_s) d s,
\end{equation*}
where $\phi$ is any symmetric positive function with support on $[-1/2, 1/2]$ and having unit mass.
Then, the almost sure convergence of some sub-sequence $X^{x,\beta,n_k}$ to $X^{x,\beta}$ is shown in \cite{BurChe01}.

The skew Brownian motion is an example of a process partially reflected at some frontier. It finds applications in the fields of stochastic modelisation and of numerical simulations, especially as it is deeply connected to diffusion processes with non-continuous coefficients (see \cite{Lejay06} and references therein). The structure of the flow of a reflected, or partially reflected, Brownian motion has been the subject of several works (see e.g. \cite{BarBur_etal01}, \cite{Burdzy09}). The long time behaviour of the distance between reflected Brownian motions
with different starting points has been largely studied too (see e.g. \cite{BurCheJon06}, \cite{CraLej89}).

Actually, a quite intriguing fact about solutions of \eqref{E:def_skew} is that they do not satisfy the
usual flow property of differential equations, which prevents two solutions with different initial positions to meet in finite time. Indeed, it is shown in \cite{BarBur_etal01} that, almost surely, the two paths 
$t \mapsto X^{x,\beta}_t$ and $t \mapsto X^{0,\beta}_t$ meet at a finite random time. Moreover, the law of
 the values of the local times of these processes at this instant of coalescence are computed in \cite{BurChe01}. 
 
In this paper, we study the time dynamic of the distance between the two processes $X^{0,\beta_1}$ and $X^{x,\beta_2}$
where the skewness parameters $\beta_1$, $\beta_2$ are possibly different.
We show that, after some random time change, the distance between the two processes 
is a Makov process, solution to
an explicit stochastic differential equation with jumps (see Theorem \ref{T:SDEJump_main} below).  
The dynamic of this stochastic differential equation enables us to compute the law of the hitting time of zero for the distance between the two skew Brownian motions. Consequently, we can 
draw informations about the hitting time of the two skew Brownian motions.

More precisely, let us denote $T^\star$ the first
instant where $X^{x,\beta_2}$ and $X^{0,\beta_1}$ meet and define
the quantity 
$U^\star=L^{0}_{T^\star}(X^{0,\beta_1})$.
For $x>0$, $0< \beta_1,\beta_2 <1$, we show, in Theorem \ref{T:law_U} below, that the random variable
$\frac{x}{\beta_1 U^{\star}}$ is distributed with a Beta law.
This extends the result of \cite{BurChe01} where the law of the hitting time was computed 
under the restriction $\beta_1=\beta_2$.
We study also the situation where $-1<\beta_2<0<\beta_1<1$ and $x>0$. In this case, we show that the random variable $\frac{\beta_1 U^{\star}}{x}$ is distributed with a Beta law (Theorem \ref{T:law_U_diff_sign}).

The organization of the paper is as follows. In Section \ref{S:Main}, we precisely state our main results.

 
The sections \ref{S:SDE_with_jumps} and \ref{S:Hitting} are devoted to the proofs of the results in the case $0<\beta_1,\beta_2<1$. 
 In Section \ref{S:SDE_with_jumps}, we introduce our fundamental tool, which is the process
$u\mapsto X^{x,\beta_2}_{\tau_u^0 (X^{0,\beta_1})}$, where $\tau_u^0 (X^{0,\beta_1})$ is the inverse local time of $ X^{0,\beta_1}$. This process is a measurement of the distance between
$X^{0,\beta_1}$ and $X^{x,\beta_2}$. 
We prove that this process is solution of some explicit stochastic differential equation with jumps, driven by the Poisson process of the excursions of $X^{0,\beta_1}$. In Section \ref{S:Hitting}, we show how the dynamic of this process enables us to compute the law of the hitting time of the two skew Brownian motions. 

In Section \ref{S:negative_beta}, we sketch 
the proofs of our results in the situation $-1<\beta_2<0<\beta_1<1$. 
For the sake of shortness, we will only put the emphasis on the main differences with the case $0<\beta_1,\beta_2<1$.
 

\section{Main results}\label{S:Main}
Consider the two skew Brownian motions,
\begin{align}
\label{E:defXx}
 X_t^{x,\beta_2}= x + B_t + \beta_2 L_t^0(X_t^{x,\beta_2}),
 \\
 \label{E:defX0}
 X_t^{0,\beta_1}=   B_t + \beta_1 L_t^0(X_t^{0,\beta_1}),
\end{align}
with $x>0$.
We introduce the c.a.d.l.a.g. process defined as
\begin{equation} \label{E:defZ}
Z^{x,\beta_1,\beta_2}_u= X^{x,\beta_2}_{\tau_u(X^{0,\beta_1})},
\end{equation}
where $\tau_u(X^{0,\beta_1})$ is the inverse of the local time, given as,
\begin{equation*}
\tau_u(X^{0,\beta_1}) = \inf \{ t \ge 0 \mid L_t^{0} (X^{0,\beta_1}) > u \}.
\end{equation*}
Note that, since $X^{0,\beta_1}_{\tau_u(X^{0,\beta_1})}=0$, we have 
$Z^{x,\beta_1,\beta_2}_u=X^{x,\beta_2}_{\tau_u(X^{0,\beta_1})}
-X^{0,\beta_1}_{\tau_u(X^{0,\beta_1})}$.
This explains why we choose below to call $Z^{x,\beta_1,\beta_2}$ the ``distance process''.
Our first result shows that the ``distance process'' is solution to a stochastic differential equation with jumps, driven by the excursion Poisson process of $X^{0,\beta_1}$. 
We need some additional notations before stating it. 
We introduce $(\textbf{e}_u)_{u >0}$ 
the excursion process associated to $X^{0,\beta_1}$,
\begin{equation*}
\textbf{e}_u(r)=X^{0,\beta_1}_{\tau_{u-}(X^{0,\beta_1}) + r}, \quad 
\text{ for $r \le \tau_{u}(X^{0,\beta_1}) - \tau_{u-}(X^{0,\beta_1}) $.}
\end{equation*}
The Poisson point process $(\textbf{e}_u)_{u >0}$ takes values in the space $\exspace$
of excursions with finite lifetime, endowed with the usual uniform topology. We denote $\textbf{n}_{\beta_1}$ the excursion measure associated to $X^{0,\beta_1}$.

Let us define $T^\star = \inf\{ t \ge 0 \mid X_t^{0,\beta_1}=X_t^{0,\beta_2} \} \in [0,\infty]$ and $U^\star = L_{T^\star}^0(X_t^{0,\beta_1})$. 
Since $X^{x,\beta_2}$ and $X^{0,\beta_1}$ are driven by the same Brownian motion, it is easy to see that they can only meet when $X^{0,\beta_1}=0$. 
As a consequence, we have
\begin{equation*}
U^\star= \inf \{ u \ge 0 \mid Z^{x,\beta_1,\beta_2}_u=0 \} \in [0,\infty],~ \text{ and }  Z^{x,\beta_1,\beta_2}>0 \text{ on } [0,U^\star).
\end{equation*}
Our first result about $Z^{x,\beta_1,\beta_2}$ is the following.
\begin{theo}  \label{T:SDEJump_main}
Assume $x>0$ and $0<\beta_1,\beta_2<1$. Almost surely, we have for all $t < U^\star$,
\begin{equation*}
Z_t^{x,\beta_1,\beta_2} = x-\beta_1 t + \sum_{0<u\le t} \beta_2 \ell(Z_{u-}^{x,\beta_1,\beta_2},{\rm{\bf e}}_u),
\end{equation*}
where $\ell: (0,\infty) \times \exspace \to [0,\infty)$ is a measurable map. 

For $h>0$, we can describe the law of ${\rm{\bf e}} \mapsto \ell(h,{\rm{\bf e}})$ under ${\rm{\bf n}}_{\beta_1}$ by
\begin{equation} \label{E:law_jumps_main} 
{\rm{\bf n}}_{\beta_1} ( \ell(h,{\rm{\bf e}}) \ge a) = 
\frac{1-\beta_1}{2h}
\left( 1+ \frac{\beta_2 a}{h} \right)^{-\frac{1+\beta_2}{2\beta_2}}, \quad 
\forall a >0.
\end{equation} 
\end{theo}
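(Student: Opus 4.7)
The plan is to combine two identities. Plugging $t=\tau_u:=\tau_u(X^{0,\beta_1})$ into \eqref{E:defX0} and using $X^{0,\beta_1}_{\tau_u}=0$ together with $L^0_{\tau_u}(X^{0,\beta_1})=u$ yields $B_{\tau_u}=-\beta_1 u$. Substituting into \eqref{E:defXx} gives at once
\[
Z_u \;=\; x - \beta_1 u + \beta_2 L^0_{\tau_u}(X^{x,\beta_2}),
\]
which already accounts for the drift $-\beta_1 t$ in the claimed SDE. It remains to write $L^0_{\tau_u}(X^{x,\beta_2})$ as the jump sum prescribed by the theorem.

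For $u<U^\star$ we have $Z>0$ on $[0,u]$, so the zero sets of $X^{0,\beta_1}$ and $X^{x,\beta_2}$ are disjoint on $[0,\tau_u]$. Since the zero set of $X^{0,\beta_1}$ has zero Lebesgue measure, the occupation measure $dL^0(X^{x,\beta_2})$ is carried by its complement, i.e.\ by the excursion intervals of $X^{0,\beta_1}$. Along an excursion ${\bf e}_v$, the process $X^{0,\beta_1}$ stays away from $0$ so $dB=d{\bf e}_v$, and \eqref{E:defXx} forces $X^{x,\beta_2}$ to solve $Y_r = Z_{v-} + {\bf e}_v(r) + \beta_2 L^0_r(Y)$ over the excursion interval, that is to be a skew$(\beta_2)$ Brownian motion started at $Z_{v-}$ and driven by ${\bf e}_v$. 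Defining $\ell(h,{\bf e})$ as the total local time at $0$ picked up by $Y$ over the life of the excursion produces the decomposition $L^0_{\tau_u}(X^{x,\beta_2})=\sum_{0<v\le u}\ell(Z_{v-},{\bf e}_v)$ and closes the SDE.

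For the law of $\ell(h,\cdot)$ under ${\bf n}_{\beta_1}=\tfrac{1+\beta_1}{2}n^+ + \tfrac{1-\beta_1}{2}n^-$, a positive excursion keeps $Y\ge h>0$, hence $\ell\equiv 0$ on $n^+$. A negative excursion contributes only when its depth exceeds $h$. The strong Markov property of It\^o's measure at $T_h:=\inf\{r:{\bf e}(r)=-h\}$ identifies, on the event $\{T_h<\infty\}$ of $n^-$-mass $1/h$, the post-$T_h$ piece (shifted by $+h$) with a standard Brownian motion $\tilde{\bf e}$ starting at $0$ and stopped at $T:=\inf\{s:\tilde{\bf e}(s)=h\}$. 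Restarting $Y$ from $0$ at time $T_h$ then gives $\ell(h,{\bf e})=L^0_T(\tilde Y)$, where $\tilde Y_s=\tilde{\bf e}(s)+\beta_2 L^0_s(\tilde Y)$ is the skew$(\beta_2)$ Brownian motion driven by $\tilde{\bf e}$.

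Finally, I would invoke the excursion theory of $\tilde Y$ itself. Writing $\tilde{\bf e}(s)=\tilde Y_s-\beta_2 L^0_s(\tilde Y)$ and noting that $L^0(\tilde Y)$ is constant on each excursion of $\tilde Y$, the stopping event $\{\tilde{\bf e}_s=h\}$ occurs during a positive excursion of $\tilde Y$ whose height exceeds $h+\beta_2 u$, with $u$ the local time at the start of that excursion. Under the excursion point process of $\tilde Y$ with intensity ${\bf n}_{\beta_2}$, the rate of such terminating excursions at local time $u$ equals $\tfrac{1+\beta_2}{2(h+\beta_2 u)}$, so
\[
\mathbb{P}\bigl(L^0_T(\tilde Y)\ge a\bigr) \;=\; \exp\Bigl(-\int_0^a \frac{1+\beta_2}{2(h+\beta_2 u)}\,du\Bigr) \;=\; \Bigl(1+\frac{\beta_2 a}{h}\Bigr)^{-(1+\beta_2)/(2\beta_2)},
\]
and multiplying by the mass $\tfrac{1-\beta_1}{2h}$ of the reduction produces \eqref{E:law_jumps_main}. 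The main technical hurdle will be a careful use of the strong Markov property for the $\sigma$-finite measure $n^-$ at $T_h$, and fixing the local-time normalization that gives $n^\pm(M>h)=1/h$, since these choices are what pin down the overall constant in \eqref{E:law_jumps_main}.
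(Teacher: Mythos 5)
Your overall strategy matches the paper's: the algebra giving $B_{\tau_u}=-\beta_1 u$ and hence $Z_u=x-\beta_1 u+\beta_2 L^0_{\tau_u}(X^{x,\beta_2})$, the observation that the local time of $X^{x,\beta_2}$ accumulates only inside excursion intervals of $X^{0,\beta_1}$, the reduction via the strong Markov property of It\^o measure to $L^0_{T^h(\omega)}(\mathcal{X})$ for a skew$(\beta_2)$ Brownian motion $\mathcal{X}$ driven by a Wiener path $\omega$ killed at $T^h$, and the exponential formula over the excursion point process of $\mathcal{X}$ giving $\bigl(1+\beta_2 a/h\bigr)^{-(1+\beta_2)/2\beta_2}$, multiplied by $\mathbf{n}_{\beta_1}(\mathbf{e}\text{ reaches }-h)=\frac{1-\beta_1}{2h}$ --- all of this is exactly the paper's computation.

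However, you misidentify the real technical hurdle. It is not the local-time normalization (the paper fixes that once via $\mathbf{n}_{|\text{B.M.}|}(\mathbf{e}\text{ reaches }h)=1/h$), nor a direct application of the strong Markov property under $n^-$. The genuine difficulty is making $\ell:(0,\infty)\times\exspace\to[0,\infty)$ a well-defined \emph{measurable} map and then proving the sum identity holds almost surely \emph{simultaneously for all $t<U^\star$}. The excursion equation $\widehat{X}_r=h+\mathbf{e}(r)+\beta_2 L^0_r(\widehat{X})$ need not have a (unique) solution for every pair $(h,\mathbf{e})$, so one must construct $\widehat{X}$ on a full-measure set of the canonical space, extend arbitrarily on the exceptional set, and then use the Master Formula to show this exceptional set is never visited along the actual excursion process. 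One then needs a recursive construction of a modified process $\widetilde{X}^x$ between successive jump times $T_i$, pathwise uniqueness for the skew SDE to identify $\widetilde{X}^x$ with $X^x$ on each $[T_i,T_{i+1}]$, and finally an argument that $\sup_i T_i=T^\star$ (no accumulation of jumps before coalescence, using that the jump intensity is bounded while $Z^x$ stays above a fixed level). These three steps are the substantive content of the paper's proof that your sketch asserts but does not supply.
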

 
\begin{rem}
Theorem \ref{T:SDEJump_main} fully details the dynamic of the ``distance process'' before it (possibly) reaches $0$. The ``distance process'' decreases with a constant negative drift, and has positive jumps. 
Moreover, the value of a jump at time $u$ is a function of the level $Z_{u-}^{x,\beta_1,\beta_2}$ and of 
the excursion $\textbf{e}_u$. 
The image of the excursion measure under this function, with a fixed level $h>0$, is given by the explicit expression \eqref{E:law_jumps_main}.
\end{rem}
In \cite{BarBur_etal01} \cite{BurChe01} it is shown that the processes $X^{0,\beta_1}$ and $X^{x,\beta_2}$ meet in finite time under some 
appropriate conditions for the skewness coefficients. 
\begin{theo}
[\cite{BarBur_etal01} \cite{BurChe01}]
\label{T:coalescence} 
Assume $x>0$ and $0 <\beta_1 , \beta_2 <1$ with $\beta_1>\frac{\beta_2}{1+2\beta_2}$.
Then the hitting time $T^\star=\inf \{ t \ge 0 \mid   X_t^{x,\beta_1}=X_t^{x,\beta_2} \}$ is almost surely finite.
\end{theo}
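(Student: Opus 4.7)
The plan is to leverage Theorem~\ref{T:SDEJump_main} in order to transform the distance process $Z^{x,\beta_1,\beta_2}$ into a genuine L\'evy process via a random time change, after which the claim reduces to the elementary fact that a compound Poisson process with negative linear drift tends to $-\infty$.

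First I would apply It\^o's formula for processes with jumps to $V_t=\log Z_t^{x,\beta_1,\beta_2}$ on $[0,U^\star)$, which is allowed since $Z^{x,\beta_1,\beta_2}>0$ there. Using Theorem~\ref{T:SDEJump_main}, the continuous part of $dV_t$ equals $-\beta_1\,dt/Z_t$ and each atom $u$ of the excursion point process contributes $\log(1+\beta_2\ell(Z_{u-},\mathbf{e}_u)/Z_{u-})$. Then I would perform the random time change $A(t)=\int_0^t ds/Z_s$, with inverse $\sigma$, and set $\widetilde V_{\tilde t}=V_{\sigma(\tilde t)}$. In the new clock, the continuous drift of $\widetilde V$ becomes the constant $-\beta_1$; the key computation (using \eqref{E:law_jumps_main}) is that the jump compensator of $\widetilde V$ becomes space-homogeneous: when $Z_{t-}=h$, the rate of jumps of $V$ of size $\ge y$ per unit $dt$ is $\frac{1-\beta_1}{2h}e^{-\alpha y}$ with $\alpha=(1+\beta_2)/(2\beta_2)$, so multiplying by $dt/d\tilde t = h$ yields the $h$-independent rate $\frac{1-\beta_1}{2}e^{-\alpha y}$.

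Consequently $\widetilde V$ is a L\'evy process, namely a compound Poisson process with drift: linear drift $-\beta_1$, jump rate $(1-\beta_1)/2$, and i.i.d.\ $\mathrm{Exp}(\alpha)$-distributed jump sizes. Its mean increment per unit $\tilde t$ equals
\[
-\beta_1+\frac{1-\beta_1}{2\alpha} = \frac{\beta_2-\beta_1(1+2\beta_2)}{1+\beta_2},
\]
and is strictly negative precisely under the hypothesis $\beta_1>\beta_2/(1+2\beta_2)$. By the strong law of large numbers for L\'evy processes, $\widetilde V_{\tilde t}\to -\infty$ linearly a.s., hence $I:=\int_0^\infty e^{\widetilde V_{\tilde s}}\,d\tilde s <\infty$ a.s. Inverting the time change yields $t(\tilde t)=\int_0^{\tilde t}e^{\widetilde V_{\tilde s}}\,d\tilde s\le I$ for all $\tilde t$, while $Z_{t(\tilde t)}=e^{\widetilde V_{\tilde t}}\to 0$; since between jumps $Z^{x,\beta_1,\beta_2}$ decreases continuously at rate $\beta_1$, this forces $U^\star\le I<\infty$ a.s. Finally $T^\star=\tau_{U^\star}(X^{0,\beta_1})<\infty$ almost surely.

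The main obstacle I expect is justifying the time change rigorously and verifying that the jumps of $\widetilde V$ are truly i.i.d.\ exponential in the new clock—this requires a careful treatment of the Poisson point process of excursions re-indexed via $A$, the essential identity being that the factor $1/h$ in the intensity formula \eqref{E:law_jumps_main} is exactly absorbed by the time-change factor $h=Z_t$ in $dt/d\tilde t$, producing a space-homogeneous generator.
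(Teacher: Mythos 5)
Your computations are correct (the time-changed jump rate $(1-\beta_1)/2$, the $\mathrm{Exp}(\alpha)$ jump law with $\alpha=(1+\beta_2)/(2\beta_2)$, and the mean drift $\theta=\frac{\beta_2-\beta_1(1+2\beta_2)}{1+\beta_2}<0$), but your route is genuinely different from the paper's. The paper also applies It\^o's formula to $\ln Z^x_t$ and isolates the same $\theta$, yet it does \emph{not} perform a random time change: it writes $\ln Z^x_t=\ln x+\theta\int_0^t\frac{\dd u}{Z^x_u}+\widetilde{\mathcal J}_t$ with $\langle\widetilde{\mathcal J},\widetilde{\mathcal J}\rangle_t=c\int_0^t\frac{\dd u}{Z^x_u}$, assumes $U^\star=\infty$ for contradiction, and splits on whether $\int_0^\infty\dd u/Z^x_u$ is finite or infinite, deploying a continuous-time Kronecker lemma for local martingales on random intervals (Lemma~\ref{L:Kronecker}) to get $\ln Z^x_t/\langle\widetilde{\mathcal J},\widetilde{\mathcal J}\rangle_t\to\theta/c<0$ in the infinite case and a deterministic comparison lemma (Lemma~\ref{L:eq_explosive}) to rule this out. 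Your Lamperti-type time change $A(t)=\int_0^t\dd s/Z^x_s$ instead converts $\log Z^x$ into an honest compound-Poisson-with-drift L\'evy process, so the L\'evy law of large numbers replaces both appendix lemmas. The two arguments exploit the same scale-invariance of $Z^x$ (cf.\ Remark~\ref{R:mul_invariance}), but yours is more structural and exhibits the L\'evy process that the paper only hints at; the one debt you owe, as you anticipate, is to verify that $A$ bijects $[0,U^\star)$ onto all of $[0,\infty)$, i.e.\ that the time-changed process does not diverge at a finite random time, which for a finite-activity L\'evy process is a null event but must be said --- this is precisely the place where the paper's explicit dichotomy on $\int_0^\infty\dd u/Z^x_u$ is doing hidden work in your formulation.
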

\begin{rem} Actually in \cite{BarBur_etal01} the case $\beta_1=\beta_2$ is considered with $x>0$, and in
\cite{BurChe01} the situation $\beta_1 \neq \beta_2$ is treated in the case $x=0$ and with the condition
$ \frac{\beta_2}{1+2\beta_2}<\beta_1<\beta_2$. Nevertheless, it is rather clear that the additional condition $\beta_1<\beta_2$
is mainly related to the choice $x=0$ and could be removed if $x>0$. 
However, we will give below a new proof 
of Theorem \ref{T:coalescence}. 
\end{rem}
In \cite{BurChe01} the law of  $U^\star=L^0_{T^\star}(X^{0,\beta_1})$ is computed in the particular situation $\beta_1=\beta_2$. In the following theorem we compute the law without this restriction.

\begin{theo} \label{T:law_U}
Assume $x>0$ and $0 <\beta_1 , \beta_2 <1$ with $\beta_1>\frac{\beta_2}{1+2\beta_2}$. 
Denote $U^\star=L^0_{T^\star}(X^{0,\beta_1})$ then the law of $U^\star$ 
has the density
\begin{equation} \label{E:law_U_star}
p_{U^\star}(x, \dd u) = 
\frac{1}{\textbf{b}(1-\xi^\star,\frac{1-\beta_1}{2\beta_1})} 
\frac{\beta_1}{x}
\left( \frac{\beta_1 u}{x} \right)^{\xi^\star -2}
\left(1-\frac{x}{\beta_1 u}\right)^{\frac{1-3\beta_1}{2\beta_1}}
\ind_{[\frac{x}{\beta_1},\infty)}(u)
\dd u 
\end{equation}
where $\ds \textbf{b}(a,b)=\int_0^1 u^{a-1} (1-u)^{b-1} \dd u=\frac{\Gamma(a)\Gamma(b)}{\Gamma(a+b)}$ and 
$\xi^\star=\frac{1}{2\beta_1}-\frac{1}{2\beta_2}$.

Hence, $\frac{x}{\beta_1 U^\star}$ is distributed as a Beta random variable $\mathcal{B}(
1-\xi^\star,\frac{1-\beta_1}{2\beta_1})$.
\end{theo}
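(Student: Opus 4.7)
My strategy rests on reading off the generator of the distance process from Theorem \ref{T:SDEJump_main} and exploiting its self-similar structure via Dynkin's formula. Since $Z = Z^{x,\beta_1,\beta_2}$ is a piecewise-deterministic Markov process on $(0,\infty)$ killed at $0$, its generator applied to smooth compactly supported test functions reads
\[ \mathcal{A}f(h) = -\beta_1 f'(h) + \int_0^\infty [f(h+y)-f(h)]\,\nu_h(dy), \quad \nu_h(dy) = \tfrac{(1-\beta_1)(1+\beta_2)}{4\beta_2\,h^2}\bigl(1+\tfrac{y}{h}\bigr)^{-\frac{1+3\beta_2}{2\beta_2}}dy, \]
where $\nu_h$ is obtained by differentiating \eqref{E:law_jumps_main}. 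The scaling $\nu_{ch}(c\,\cdot) = c^{-1}\nu_h(\cdot)$ yields the self-similarity $(Z^{x,\beta_1,\beta_2}_u)_u \stackrel{d}{=} (x\,Z^{1,\beta_1,\beta_2}_{u/x})_u$ and hence $U^\star_x \stackrel{d}{=} x\,U^\star_1$. The law of $x/(\beta_1 U^\star)$ is therefore $x$-invariant, and it suffices to identify it.

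The key algebraic fact is that $\mathcal{A}$ acts on power functions as $\mathcal{A}h^\alpha = C(\alpha)\,h^{\alpha-1}$ for $\alpha < (1+\beta_2)/(2\beta_2)$, where a direct substitution $y = hv$ in the jump integral gives
\[ C(\alpha) = \beta_1\,\alpha\,\bigl(\alpha - (1-\xi^\star)\bigr) \big/ \bigl(\tfrac{1+\beta_2}{2\beta_2}-\alpha\bigr). \]
The two roots of $C$ are thus $0$ and $1-\xi^\star$, and the coalescence assumption $\beta_1 > \beta_2/(1+2\beta_2)$ of Theorem \ref{T:coalescence} is exactly the condition $1-\xi^\star > 0$ making the nontrivial root positive. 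Consequently $(Z_u^{1-\xi^\star})_u$ is a non-negative local martingale on $[0,U^\star)$, vanishing at $U^\star$.

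Applying Dynkin's formula to $h^\alpha$ on the stochastic interval $[0, U^\star \wedge \tau_R]$ with $\tau_R = \inf\{u : Z_u \ge R\}$, and letting $R \to \infty$ (dominating the overshoot term via the heavy-tail estimate $\nu_h([R,\infty)) \asymp R^{-(1+\beta_2)/(2\beta_2)}$), I obtain the one-parameter family of identities
\[ E^x\!\!\left[\int_0^{U^\star} Z_u^{\alpha-1}\,du\right] = -\frac{x^\alpha}{C(\alpha)}, \qquad \alpha \in (0, 1-\xi^\star), \]
which determines the Mellin transform of the occupation measure of $Z$ on $[0,U^\star]$. To convert this information into the law of $U^\star$ itself, I consider the resolvent $g_\lambda(h) = E^h[\int_0^{U^\star} e^{-\lambda u}du]$, which solves $(\lambda-\mathcal{A})g_\lambda = 1$ on $(0,\infty)$ with $g_\lambda(0^+)=0$; the scaling reduces this to a one-variable functional equation, which I solve by an ansatz built on the harmonic exponent $1-\xi^\star$. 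Matching the resulting Laplace transform $E^x[e^{-\lambda U^\star}] = 1-\lambda g_\lambda(x)$, or equivalently the Mellin moments $E^x[(U^\star)^{-s}]$, with those of $(x/\beta_1)\cdot V^{-1}$ for $V \sim \mathcal{B}(1-\xi^\star,(1-\beta_1)/(2\beta_1))$, yields \eqref{E:law_U_star}; the final statement then follows from the change of variables $v = x/(\beta_1 u)$ applied to the density.

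The hardest step will be the justification of the $R \to \infty$ limit in Dynkin's formula: the harmonic exponent $1-\xi^\star$ lies strictly below but close to the tail index $(1+\beta_2)/(2\beta_2)$ of $\nu_h$, so a sharp control of the boundary term $E^x[Z_{\tau_R}^\alpha;\,\tau_R < U^\star]$ is required. I expect this to follow by comparing $Z$ with its compound-Poisson majorant (obtained by dropping the negative drift $-\beta_1$), whose excursions above high levels $R$ are easier to bound. Once the boundary analysis is secured, the identification of the Beta law is purely algebraic, relying on the functional equation $\Gamma(z+1)=z\Gamma(z)$ to match the two explicit products of Gamma functions.
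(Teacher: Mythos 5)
Your overall strategy coincides with the paper's: you identify the generator $\mathcal{A}$ of $Z$, use the self-similarity $(Z^x_u)_u \overset{d}{=} (xZ^1_{u/x})_u$ to normalize $x$, and set up a Dynkin-type equation to be solved for the Laplace transform of $U^\star$. Your computation of $C(\alpha)$ is correct and is in fact the same algebraic object as the paper's Mellin multiplier $Q(\xi)$ in Lemma \ref{L:ODE}, via $C(\alpha)=Q(1-\alpha)$ (so your harmonic exponent $1-\xi^\star$ corresponds to the paper's root $\xi=\xi^\star$ of $Q$). The observation that $Z^{1-\xi^\star}$ is a positive local martingale and that the coalescence condition $\beta_1>\beta_2/(1+2\beta_2)$ is exactly $1-\xi^\star>0$ is a nice structural remark that the paper does not isolate.

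However, the proposal stops short of the actual determination of the law, and I see two concrete gaps. First, your step with $E^x\left[\int_0^{U^\star}Z_u^{\alpha-1}\,\dd u\right]=-x^\alpha/C(\alpha)$ gives the Mellin transform of the Green (occupation) measure of $Z$ killed at $U^\star$, not the Mellin moments $E^x[(U^\star)^{-s}]$. These are different objects, and you never explain how to pass from one to the other; in the proposal this computation is in fact never used again. The correct route is to Mellin-transform $u_\lambda$ itself: since $u_\lambda(x)=\mathbb{E}[e^{-\lambda x U^{\star,1}}]$, one has $\mathcal{M}[u_\lambda](s)=\lambda^{-s}\Gamma(s)\mathbb{E}[(U^{\star,1})^{-s}]$, and the Dynkin equation $\mathcal{A}u_\lambda=\lambda u_\lambda$ then becomes the recursion $m(\xi)=\frac{\beta_1(\xi-\xi^\star)}{\xi+\gamma-2}m(\xi-1)$ for $m(s)=\mathbb{E}[(U^{\star,1})^{-s}]$. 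But this is precisely the Mellin transform of the second-order ODE (Kummer equation) that the paper derives in Lemma \ref{L:ODE} and then solves explicitly; claiming to solve the functional equation ``by an ansatz built on the harmonic exponent'' skips the hard part. Second, to justify either the Mellin manipulations or the recursion, and to single out the correct solution (the recursion above determines $m(\xi)$ only up to a $1$-periodic factor; equivalently, the Kummer ODE has a two-dimensional solution space), one needs the regularity, boundedness, boundary, and decay estimates on $u_\lambda$ and its derivatives — this is precisely the content of Lemma \ref{L:Dynkin} in the paper, which your proposal does not address. The paper then selects $y_2=e^xU(\xi^\star+\gamma-2,\xi^\star,-x)$ via the exponential decay of $u_\lambda$ and normalizes at $0^+$; you would need the analogous uniqueness step. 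Until that step is supplied, the proposal identifies the right objects but does not prove the theorem.
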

\begin{rem} For $\beta_1=\beta_2$ we retrieve the result of \cite{BurChe01}. However,
in \cite{BurChe01} the cumulative distribution function of $U^\star$ was explicitly
derived using a max-stability argument for the law of $U^\star$. By \eqref{E:law_U_star} 
we see that for $\beta_1 \neq \beta_2$ the cumulative distribution function cannot be computed explicitly. Actually,
arguments similar to \cite{BurChe01} 
do not seem to apply directly here.
\end{rem}


The following proposition deals with the finiteness of the hitting time of $X^{0,\beta_1}$ and $X^{x,\beta_2}$ when one of the skewness parameters is negative. It can be easily derived from
Theorem \ref{T:coalescence}; a proof is given in Section \ref{S:negative_beta}.
\begin{prop} \label{P:coalescence_negative}
Assume $x>0$ and $-1<\beta_2<0<\beta_1<1$, then $T^\star$ is almost surely finite.
\\
Assume $x>0$ and $-1<\beta_1<0<\beta_2<1$, then $T^\star=\infty$ almost surely.
\end{prop}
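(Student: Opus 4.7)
The plan is to start from the identity obtained by subtracting \eqref{E:defXx} from \eqref{E:defX0}, which cancels the common Brownian motion and yields
\begin{equation*}
X^{x,\beta_2}_t - X^{0,\beta_1}_t = x + \beta_2 L^0_t(X^{x,\beta_2}) - \beta_1 L^0_t(X^{0,\beta_1}), \qquad t \ge 0.
\end{equation*}
Both local times are nondecreasing, nonnegative and continuous, so the right-hand side is a continuous function of $t$ whose monotonicity is controlled by the signs of $\beta_1,\beta_2$.

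The second statement then falls out immediately. When $-1<\beta_1<0<\beta_2<1$, both contributions $\beta_2 L^0_t(X^{x,\beta_2})$ and $-\beta_1 L^0_t(X^{0,\beta_1})$ are nonnegative, so the right-hand side is bounded below by $x>0$ for every $t \ge 0$. The two processes therefore never meet and $T^\star = \infty$ almost surely.

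The first statement requires a little more. In the regime $-1<\beta_2<0<\beta_1<1$ the same expression is now nonincreasing, and the plan is to show it eventually becomes negative before applying the intermediate value theorem. The key input is that $X^{0,\beta_1}$ is a recurrent skew Brownian motion (standard for $|\beta_1|<1$), so $L^0_t(X^{0,\beta_1}) \to +\infty$ almost surely; combined with $\beta_1 > 0$, this drives the displayed quantity to $-\infty$, and pathwise continuity gives $T^\star < \infty$ almost surely.

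As an alternative for Case 1, one can follow the route explicitly suggested by the paper and use Theorem \ref{T:coalescence}: pick any $\beta_2' \in (0,\beta_1)$, which automatically satisfies $\beta_1 > \beta_2'/(1+2\beta_2')$, build $X^{x,\beta_2'}$ from the same Brownian motion $B$, and get $T' := \inf\{t \ge 0 : X^{0,\beta_1}_t = X^{x,\beta_2'}_t\} < \infty$ almost surely. Invoking the standard comparison principle for skew Brownian motions driven by a common Brownian motion (monotonicity in the skewness parameter), one has $X^{x,\beta_2}_t \le X^{x,\beta_2'}_t$ since $\beta_2 < \beta_2'$; hence at $t = T'$ the difference $X^{x,\beta_2}_{T'} - X^{0,\beta_1}_{T'}$ is nonpositive while at $t=0$ it equals $x>0$, and the intermediate value theorem concludes. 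The main obstacle in either route is the same: one needs a global input (recurrence of skew BM, or Theorem \ref{T:coalescence} together with the comparison principle) to convert the local sign information into a crossing statement in finite time; the rest is elementary bookkeeping.
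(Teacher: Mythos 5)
Your proposal is correct, and in fact you give two valid routes, only one of which matches the paper.

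The paper's proof is a one-liner: it cites the comparison theorem (Theorem~3.1 of Burdzy--Chen \cite{BurChe01}, $\beta \le \beta' \Rightarrow X^{x,\beta} \le X^{x,\beta'}$ pathwise) and says the proposition follows from Theorem~\ref{T:coalescence}. That is exactly your ``alternative'' route for Case~1: replace $\beta_2$ by some $\beta_2'\in(0,\beta_1)$, note that $\beta_1>\beta_2'>\beta_2'/(1+2\beta_2')$ is automatic, apply Theorem~\ref{T:coalescence} and comparison, then use continuity of paths. The paper's (unstated) argument for Case~2 would likewise squeeze $X^{0,\beta_1}\le B_t < x+B_t \le X^{x,\beta_2}$ using comparison with the $\beta=0$ solutions.

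Your primary route is genuinely different and worth noting. Subtracting \eqref{E:defXx} and \eqref{E:defX0} eliminates the common Brownian motion and reduces the question to the sign and monotonicity of $x+\beta_2 L^0_t(X^{x,\beta_2})-\beta_1 L^0_t(X^{0,\beta_1})$. For Case~2 this gives the conclusion with no external input at all, which is cleaner than invoking comparison. For Case~1 the only global input is the recurrence of the skew Brownian motion $X^{0,\beta_1}$ (so that $L^0_t(X^{0,\beta_1})\to\infty$ a.s.), which is a far more elementary fact than Theorem~\ref{T:coalescence}. In exchange, your argument does not use any of the machinery developed in the paper, whereas the paper's route recycles Theorem~\ref{T:coalescence}. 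Both arguments are correct; just make sure, if you keep the recurrence route, to cite a reference for $L^0_t(X^{0,\beta_1})\to\infty$ (or observe it from the fact that skew Brownian motion is a recurrent regular diffusion), and to state explicitly that the intermediate value theorem applies because all processes involved have continuous paths.
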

We can compute the law of the hitting time when the skewness parameters have different signs.
\begin{theo} \label{T:law_U_diff_sign}
Assume $x>0$ and $-1<\beta_2<0<\beta_1<1$,
then the law of $U^\star=L^0_{T^\star}(X^{0,\beta_1})$ has the density
\begin{equation} \label{E:law_U_star_beta_neg}
p_{U^\star}(x, \dd u) = 
\frac{1}{\textbf{b}(\frac{\beta_2-1}{2\beta_2},\frac{1-\beta_1}{2\beta_1})}
\frac{\beta_1}{x}
\left( \frac{\beta_1 u}{x}\right)^{-\frac{1+\beta_2}{2\beta_2}}
\left( 1-\frac{\beta_1 u}{x} \right)^{\frac{1-3\beta_1}{2\beta_1}}
\ind_{[0,\frac{x}{\beta_1}]} (u) \dd u.
\end{equation}
Hence, $\frac{\beta_1 U^\star}{x}$ is distributed as a Beta random variable $\mathcal{B}(\frac{\beta_2-1}{2\beta_2},\frac{1-\beta_1}{2\beta_1})$.
\end{theo}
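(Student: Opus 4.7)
The plan is to follow the approach used for Theorem~\ref{T:law_U}, adapted to the mixed-sign regime. The first step is to use the analog of Theorem~\ref{T:SDEJump_main} for $-1<\beta_2<0<\beta_1<1$, which is established in Section~\ref{S:negative_beta}. In that regime $Z^{x,\beta_1,\beta_2}$ still solves an SDE driven by the excursion Poisson process of $X^{0,\beta_1}$, with the same drift $-\beta_1$ but now with non-positive jumps $\beta_2\ell(Z_{u-},\textbf{e}_u)$ whose intensity has an explicit scale-invariant form analogous to~\eqref{E:law_jumps_main}. Crucially, because $\beta_2<0$ the jumps lie in $[-Z_{u-},0]$, so $Z$ is strictly decreasing, remains non-negative, and reaches~$0$ continuously at the finite random time $U^\star$ (finiteness being ensured by Proposition~\ref{P:coalescence_negative}); thus $Z_{U^\star}=Z_{U^\star-}=0$ and $\beta_1U^\star\in(0,x)$ almost surely, which already explains the support of~\eqref{E:law_U_star_beta_neg}.

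Next I would exploit the scale invariance of the jump kernel. Applying the infinitesimal generator $\mathcal{L}$ of $Z$ to a power function $f_\alpha(z)=z^\alpha$ yields
\begin{equation*}
\mathcal{L}f_\alpha(z)=\bigl[-\alpha\beta_1+I(\alpha)\bigr]\,z^{\alpha-1},
\end{equation*}
where $I(\alpha)$ is an explicit Beta-type integral in $\alpha$, $\beta_1$, $\beta_2$ coming from the jump measure and converging on a half-plane in $\alpha$ that reflects the compactly supported nature of the jumps when $\beta_2<0$. Dynkin's formula at the localizing time $\sigma_\varepsilon=\inf\{u:Z_u\le\varepsilon\}$ then produces, for each admissible~$\alpha$, an integral identity linking $\mathbb{E}\bigl[Z_{u\wedge\sigma_\varepsilon}^\alpha\bigr]$ to $\mathbb{E}\!\int_0^{u\wedge\sigma_\varepsilon}Z_v^{\alpha-1}\,dv$.

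Third, I would push $\varepsilon\to 0$ and $u\to\infty$. Because the jumps are bounded by $Z_{u-}$ in magnitude, no overshoot of $0$ occurs and this passage to the limit is essentially a dominated-convergence argument; it produces a closed-form identity relating the Mellin transform $\alpha\mapsto\mathbb{E}\bigl[(x-\beta_1 U^\star)^\alpha\bigr]$ — or, via the change of variable $u\mapsto Z_u$ which is here a bijection from $[0,U^\star]$ onto $[0,x]$, the Mellin transform of $\beta_1U^\star/x$ — to explicit Gamma-function ratios. Matching those ratios with the Mellin transform $\textbf{b}(a+s,b)/\textbf{b}(a,b)$ of a Beta variable $\mathcal{B}(a,b)$ with $a=(\beta_2-1)/(2\beta_2)$ and $b=(1-\beta_1)/(2\beta_1)$, and inverting, yields \eqref{E:law_U_star_beta_neg}.

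The hard part will be carrying out the explicit evaluation of $I(\alpha)$ and determining its half-plane of convergence — this is where the mixed-sign regime visibly departs from the situation of Theorem~\ref{T:law_U}, since the exponent in the tail formula changes sign and the jump measure becomes compactly supported on $[0,Z_{u-}/|\beta_2|]$. Once $I(\alpha)$ is reduced to a classical Beta integral, the identification of the hitting-time law as the stated Beta distribution is an algebraic matching exercise. The boundedness of the negative jumps relative to $Z_{u-}$ is what makes the limit $\varepsilon\to 0$ harmless and spares us the delicate overshoot analysis that would otherwise be required in a pure-jump setting.
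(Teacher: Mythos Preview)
Your proposal tracks the paper through the SDE description and the scale invariance of the generator, but the third step has a real gap. You assert that passing to the limit in Dynkin's formula for $f_\alpha(z)=z^\alpha$ produces the Mellin transform of $x-\beta_1 U^\star$, ``or, via the change of variable $u\mapsto Z_u$ which is here a bijection from $[0,U^\star]$ onto $[0,x]$, the Mellin transform of $\beta_1 U^\star/x$''. But $u\mapsto Z_u$ is \emph{not} a bijection onto $[0,x]$: although $Z$ is non-increasing, it has genuine downward jumps (that is precisely the jump term in your SDE), so its range has gaps and the change of variables fails. Concretely, Dynkin on $z^\alpha$ with $\alpha>0$ gives, after the limit,
\[
x^\alpha=\bigl[\alpha\beta_1-I(\alpha)\bigr]\,\mathbb{E}\!\int_0^{U^\star}Z_v^{\alpha-1}\,\dd v,
\]
which determines the occupation functionals $J(\alpha)=\mathbb{E}\int_0^{U^\star}Z_v^{\alpha}\,\dd v$, not $\mathbb{E}\bigl[(U^\star)^\alpha\bigr]$. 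Only $\mathbb{E}[U^\star]=J(0)$ drops out directly; to reach the higher moments you would need an extra device---for instance, combining the scaling $U^{\star,x}\overset{\mathrm d}{=}xU^{\star,1}$ with the Markov property to turn $J(k)$ into a one-step recursion for $m_k=\mathbb{E}_1[(U^\star)^k]$---and that argument is absent from your outline. (The side claim that $Z$ ``reaches $0$ continuously'' is also unjustified: jump sizes bounded by $Z_{u-}$ only ensures $Z\ge 0$, not that no jump lands at $0$.)

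For comparison, the paper takes a different route that avoids this difficulty. It works with the Laplace transform $u_\lambda(x)=\mathbb{E}_x[e^{-\lambda U^\star}]$, shows it satisfies $Au_\lambda=\lambda u_\lambda$, and then applies the Mellin transform in the spatial variable $x$. The scale invariance you identified makes $A$ act as a rational multiplier in the Mellin domain, and the equation reduces to the second-order ODE $\beta_1 x u''_\lambda+(\lambda x+\beta_1\xi^\star)u'_\lambda-\lambda(\gamma-2)u_\lambda=0$, i.e.\ Kummer's equation after an affine change. For $\beta_2<0$ the relevant bounded solution is the confluent hypergeometric $M$, whose integral representation on $[0,1]$ identifies $u_\lambda$ as the Laplace transform of the Beta density on $[0,x/\beta_1]$. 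A genuine technical point specific to this regime, which your sketch does not anticipate, is that $u_\lambda$ no longer decays exponentially (since $U^\star$ can be arbitrarily small), so the Mellin manipulations require separate a~priori bounds (Lemma~\ref{L:borne_ulambda_negative}).
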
 

It remains to study the case where $\beta_1<0$, $\beta_2<0$.
We have the following result, which will be deduced from the previous ones.
\begin{coro} \label{C:x_neg}
Assume $x>0$ with $-1<\beta_1, \beta_2<0$ and $\abs{\beta_2} > \frac{\abs{\beta_1}}{1+2 \abs{\beta_1}}$. Then $T^\star$ is finite and $\left( 1-\frac{\beta_1 L^{0}_{T^\star}(X^{0,\beta_1})}{x}\right)^{-1}$ is distributed as a product of two independent Beta variables.
\end{coro}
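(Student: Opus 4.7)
The strategy is to decompose $T^\star = T_1 + (T^\star - T_1)$ at the first zero $T_1 := \inf\{t \geq 0 : X_t^{x,\beta_2}=0\}$ of $X^{x,\beta_2}$, and to extract one independent Beta factor from each sub-interval. To identify the target quantity, I first evaluate the SDEs \eqref{E:defXx}--\eqref{E:defX0} at $T^\star$; since $X_{T^\star}^{0,\beta_1}=X_{T^\star}^{x,\beta_2}$, the identity $\beta_1 L^0_{T^\star}(X^{0,\beta_1}) - \beta_2 L^0_{T^\star}(X^{x,\beta_2}) = x$ follows, and using $\beta_2<0$, this rewrites as
\begin{equation*}
\Bigl(1 - \frac{\beta_1 L^0_{T^\star}(X^{0,\beta_1})}{x}\Bigr)^{-1} = \frac{x}{|\beta_2|\, L^0_{T^\star}(X^{x,\beta_2})}.
\end{equation*}
Since $L^0(X^{x,\beta_2})$ vanishes on $[0,T_1]$, we have $X^{x,\beta_2}_t = x + B_t$ for $t < T_1$, and therefore $T_1 = \inf\{t : B_t = -x\} < \infty$ almost surely. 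Setting $A_1 := L^0_{T_1}(X^{0,\beta_1})$ and $y := -X^{0,\beta_1}_{T_1} = x + |\beta_1|A_1$, the plan is to split the right-hand side as $(x/y) \cdot (y / (|\beta_2|\, L^0_{T^\star}(X^{x,\beta_2})))$ and to show that each factor is Beta, with the two factors independent.

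For the second (post-$T_1$) factor, I exploit the fact that $-X^{0,\beta}$ is a skew Brownian motion of parameter $-\beta$ driven by $-B$, together with the symmetry of $L^0$ under $X \mapsto -X$ and the strong Markov property at $T_1$. The shifted-reflected pair $(-X^{0,\beta_1}_{T_1+s}, -X^{x,\beta_2}_{T_1+s})_{s \geq 0}$ is then a pair of skew Brownian motions with \emph{positive} skewness parameters $-\beta_1, -\beta_2 \in (0,1)$, driven by a common fresh Brownian motion, and starting respectively at $y>0$ and $0$. Relabeling the one starting at $0$ as the reference (skewness $\tilde\beta_1 := -\beta_2$) and the one starting at $\tilde x := y$ as the ``other'' (skewness $\tilde\beta_2 := -\beta_1$), the assumption $|\beta_2|>|\beta_1|/(1+2|\beta_1|)$ translates precisely into $\tilde\beta_1 > \tilde\beta_2/(1+2\tilde\beta_2)$, the condition required by Theorems \ref{T:coalescence} and \ref{T:law_U}. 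Thus $T^\star - T_1 < \infty$ almost surely, which proves $T^\star<\infty$, and Theorem \ref{T:law_U} yields that, conditionally on $\mathcal{F}_{T_1}$, the ratio $y / (|\beta_2| L^0_{T^\star}(X^{x,\beta_2}))$ has a Beta distribution whose parameters depend only on $\beta_1,\beta_2$; in particular it is independent of $\mathcal{F}_{T_1}$.

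For the $\mathcal{F}_{T_1}$-measurable factor $x/y$, I compute the law of $A_1$ via excursion theory for $X^{0,\beta_1}$. During the excursion at local time level $v$, $L^0(X^{0,\beta_1})$ is frozen at $v$, so $B_t = X^{0,\beta_1}_t + |\beta_1|v$, and the event that $B$ reaches $-x$ during this excursion is equivalent to $\{-\min \textbf{e}_v \geq x + |\beta_1|v\}$. Hence $A_1$ is the first level $v$ at which the Poisson point process $(\textbf{e}_v)$ has an atom with this property. Applying the classical identity $\textbf{n}_{\beta_1}(-\min \textbf{e} \geq a) = (1-\beta_1)/(2a)$ for the excursion measure of the skew Brownian motion with respect to the symmetric local time, Campbell's formula gives
\begin{equation*}
\mathbb{P}(A_1 > u) = \exp\Bigl(-\int_0^u \frac{1-\beta_1}{2(x+|\beta_1|v)}\,dv\Bigr) = \Bigl(\frac{x}{x+|\beta_1|u}\Bigr)^{(1-\beta_1)/(2|\beta_1|)},
\end{equation*}
from which an elementary change of variable yields $x/y = x/(x+|\beta_1|A_1) \sim \mathcal{B}((1-\beta_1)/(2|\beta_1|),\,1)$. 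Combined with the previous step, this displays the target quantity as a product of two independent Beta random variables.

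The main obstacle is the excursion-theoretic computation for the first factor: one must correctly identify $A_1$ as the hitting level of an explicit functional of the Poisson point process of excursions, and use the correct normalization of $\textbf{n}_{\beta_1}$ corresponding to the symmetric local time. The rest of the argument is then a clean reduction, via reflection and the strong Markov property, to the already-established Theorem \ref{T:law_U}.
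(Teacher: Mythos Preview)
Your proof is correct and follows essentially the same approach as the paper's: both arguments reflect to obtain positive skewness coefficients, apply the strong Markov property at the first zero of $X^{x,\beta_2}$, invoke Theorem~\ref{T:law_U} (with the roles of the two processes swapped) to obtain the post-$T_1$ Beta factor, and compute the pre-$T_1$ factor via an excursion-theoretic computation equivalent to \eqref{E:W_step3}. Your initial rewriting of the target as $x/(\abs{\beta_2}\,L^0_{T^\star}(X^{x,\beta_2}))$ via the identity $\beta_1 L^0_{T^\star}(X^{0,\beta_1}) - \beta_2 L^0_{T^\star}(X^{x,\beta_2}) = x$ is slightly more direct than the paper's chain of equalities, but the substance is the same.
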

\begin{rem} Remark that the condition $x>0$ in the previous results is essentially irrelevant. Indeed if $x<0$, we may set $\widetilde{X}^x=-X^x$, $\widetilde{X}^0=-X^0$, $\widetilde{\beta_1}=-\beta_1$ and
$\widetilde{\beta_2}=-\beta_2$. This simple transformation reduces the situation to one of those studied in Theorems \ref{T:law_U}--\ref{T:law_U_diff_sign} or Corollary \ref{C:x_neg}.
\end{rem}


Throughout all the paper, the parameter
$\beta_1$ is associated to the process starting from $0$ and $\beta_2$ to the one starting from $x>0$, so we will,
from now on, suppress the dependence upon the skewness parameters and write $X^0$, $X^x$, $Z^x$ for $X^{0,\beta_1}$, $X^{x,\beta_2}$, $Z^{x,\beta_1,\beta_2}$.  
Moreover, we shall only consider the inverse of local time for the process 
$X^{0}$ 
and hence we shall write $\tau_u$ for $\tau_u(X^0)$ when no confusion is possible. 

Let us introduce, for $u\ge 0$, the sigma field,
\begin{equation}
\label{E:def_G}
\mathcal{G}_u=\mathcal{F}_{\tau_u}.
\end{equation}
With these notations, the process $(Z^x_u)_{u \ge 0}$ is $(\mathcal{G}_u)_{u \ge 0}$ adapted.
Moreover we can see that its law defines a Markov semi group.
Indeed, we can use the a.s. relation $\tau_l(X^{0}_{\tau_h+\cdot})=\tau_{h+l}(X^{0})-\tau_{h}(X^0)$ to get
\begin{equation}\label{E:proof_Z_makov}
Z^x_{h+l}=X^{x}_{\tau_{h+l}(X^0)}=X^x_{\tau_h(X^0)+\tau_l(X^{0}_{\tau_h+\cdot})}.
\end{equation}
Then, using the pathwise uniqueness for the skew equations, 
 we see that the law of $(X^x_{\tau_h+\cdot},X^0_{\tau_h+\cdot})$
conditional to $\mathcal{F}_{\tau_h}$ is the law of solutions to \eqref{E:defXx}--\eqref{E:defX0}
starting from $(X^x_{\tau_h},X^0_{\tau_h})=(X^x_{\tau_h},0)$. This fact with \eqref{E:proof_Z_makov} shows that the law of $(Z_u^x)_{u \ge 0}$ defines a Markov semi group.

Consequently, we remark that $U^\star= \inf \{ u \ge 0 \mid Z^{x}_u=0 \}$ is the a hitting time of a Markov process.
This is the crucial fact that allows us to compute the law of $U^\star$.

In the next Section, we will study the dynamics of the Markov process $Z^x$ and, in particular, give the proof
of Theorem \ref{T:SDEJump_main}.
For simplicity, we have decided to focus the paper mainly on the situation $0<\beta_1,\beta_2<1$.
This restriction especially holds true in the Sections \ref{S:SDE_with_jumps}--\ref{S:Hitting} below.

\section{Stochastic differential equation with jumps characterisation of $Z^x$ (case $0<\beta_1,\beta_2<1$)}
\label{S:SDE_with_jumps}

In this section we assume that we are in the situation  $0<\beta_1,\beta_2<1$.
We will show that $Z^x$ is solution to some stochastic differential equation governed by
the excursion point Poisson process of $X^0$.


First, we recall 
some basic facts about the excursion theory.

\subsection{Excursions of a skew Brownian motion}
Consider $X^{0,\beta}$ a skew Brownian motion starting from $0$ and introduce the inverse of its local time
$\tau_u(X^{0,\beta}) = \inf \{ t \ge 0 \mid L_t^{0} (X^{0,\beta}) > u \}$.
Recall that the excursion process $(\textbf{e}_u)_{u >0}$ associated to $X^{0,\beta}$ is
$\textbf{e}_u(r)=X^{0,\beta}_{\tau_{u-}(X^{0,\beta}) + r},$
for $r \le \tau_{u}(X^{0,\beta}) - \tau_{u-}(X^{0,\beta})$.
The Poisson point process $(\textbf{e}_u)_{u >0}$ takes values in the space $\exspace$
of 
excursions.
For $\textbf{e} \in \exspace$ we denote $R(\textbf{e})$ the lifetime of the excursion and recall that by definition $\textbf{e}$ does not hit zero on $(0,R(\textbf{e}) )$, and $\textbf{e}(r)=0$ for $r \ge   R(\textbf{e})$.

If we denote $\textbf{n}_\beta$ the excursion measure of the $X^{0,\beta}$, we have the formula, for 
$A$ any Borel subset of $\exspace$,
\begin{equation} \label{E:n_nBM}
\textbf{n}_\beta(A)= \frac{(1+\beta)}{2}\textbf{n}_{\abs{\text{B.M.}}}(A) + \frac{(1-\beta)}{2} 
\textbf{n}_{\abs{\text{B.M.}}}(-A)
\end{equation}
where $\textbf{n}_{\abs{\text{B.M.}}}$ is the excursion measure for 
the absolute value of a Brownian motion. 
Let us recall some useful facts on the excursion measure $\textbf{n}_{\abs{\text{B.M.}}}$, that are
immediate from well known properties of the excursion measure of a standard Brownian motion. 

First, we recall the law of the height of an excursion (for example see chapter 12 in \cite{RevYor}):
\begin{equation}\label{E:n_reaches}
\textbf{n}_{\abs{\text{B.M.}}}( \textbf{e} \text{ reaches } h) =\frac{1}{h}, \quad \text{ for } h>0.
\end{equation}


Second, we recall that in the case of a standard Brownian motion, the law of the excursion after reaching some fixed level $h$, is the same as the law of a Brownian motion starting from $h$ before it hits $0$. 
We rewrite precisely this property for the reflected Brownian motion as follows.
Let $G: 
\mathcal{C}( [0,\infty) ,\mathbb{R})  \to \mathbb{R}_+$ be some measurable functional on the canonical Wiener space.
 For $h \in \mathbb{R}$ denote
$T^h(\textbf{e})=\inf\{s \mid \textbf{e}_s=h \}$ and let $w^h_r:=w^h_r(\textbf{e}):=\textbf{e}_{T^h(\textbf{e})+r}-h$  for 
$r \le R(\textbf{e})-T^{h}(\textbf{e})=T^{-h}(w^h(\textbf{e}))$ be the shifted part of the excursion after $T^h$.
Then by Theorem 3.5 p. 491 in \cite{RevYor}, for $h>0$,
\begin{align*}
\frac{ 
\textbf{n}_{\abs{\text{B.M.}}} \left[
G( \textbf{e}_{ (T^h(\textbf{e})+ \cdot ) \wedge R(\textbf{e})} -h ) \ind_{ \{ \text{ \textbf{e} reaches } h \}}
 \right]
}{\textbf{n}_{\abs{\text{B.M.}}} \left[ 
\text{ \textbf{e} reaches } h
\right]
}
&=
\frac{ 
\textbf{n}_{\abs{\text{B.M.}}} \left[
G( w^h_{\cdot \wedge T^{-h} (w^h)} ) \ind_{ \{ \text{ \textbf{e} reaches } h \}}
 \right]
}{\textbf{n}_{\abs{\text{B.M.}}} \left[ 
\text{ \textbf{e} reaches } h
\right]
}
\\
&= \int_{\mathcal{C}([0,\infty),\mathbb{R})}  G (w_{. \wedge  T^{-h}(w)}) \dd \mathbb{W}(w)
\end{align*} 
where $\mathbb{W}$ is the standard Wiener measure.

Using \eqref{E:n_nBM} we deduce that for $h \neq 0$,
\begin{equation} \label{E:fundamental_pte_n}
\frac{  \textbf{n}_\beta \left[
G( w^h_{\cdot \wedge T^{-h} (w^h)} ) \ind_{ \{  \textbf{e} \text{ reaches } h \}}
 \right] }
{\textbf{n}_\beta \left[ 
\text{ \textbf{e} reaches } h
\right]
}
= \int_{\mathcal{C}([0,\infty),\mathbb{R})}  G (w_{. \wedge  T^{-h}(w)}) \dd \mathbb{W}(w).
\end{equation}

\subsection{Representation of the local time of $X^x$ as a functional of the excursion process of $X^0$.}
The next Proposition shows that $L^0_{\tau_u}(X^x)$ is a functional of $(\textbf{e}_u)_{u>0}$ the excursion process of $X^0$ (recall that $\tau_u=\tau_u(X^{0,\beta_1})$). 
\begin{prop}  \label{P:L0_somme_ell}
Almost surely, one has the representation for
all $t < U^\star$ 
\begin{equation} \label{E:L0_somme_ell}
L^0_{\tau_t}(X^x) = \sum_{0 < u \le t} \ell ( X_{\tau_{u-}}^x, {\rm{\bf e}}_u), 
\end{equation}
where $\ell: (0,\infty) \times \exspace \to [0,\infty)$ is a measurable map. 

For $h>0$, we can describe the law of ${\rm{\bf e}} \mapsto \ell(h,{\rm{\bf e}})$ under ${\rm{\bf n}}_{\beta_1}$ by
\begin{equation} \label{E:law_ell}
{\rm{\bf n}}_{\beta_1} ( \ell(h,{\rm{\bf e}}) \ge a) = 
\frac{1-\beta_1}{2h}
\left( 1+ \frac{\beta_2 a}{h} \right)^{-\frac{1+\beta_2}{2\beta_2}}, \quad 
\forall a >0.
\end{equation}  
\end{prop}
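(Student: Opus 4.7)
The strategy is to decompose $L^0_{\tau_t}(X^x)$ along the excursion intervals of $X^0$ and exploit that $X^0$ and $X^x$ share the same driving Brownian motion. Fix $u \in (0, U^\star)$ and set $h := Z^x_{u-} = X^x_{\tau_{u-}} > 0$. Since $L^0(X^0)$ is constant on the excursion interval $(\tau_{u-}, \tau_u)$, the SDEs \eqref{E:defXx}--\eqref{E:defX0} force $X^x_s - X^0_s = h$ throughout the excursion as long as $X^x$ has not yet visited zero. Hence $X^x$ accumulates no local time during $\textbf{e}_u$ unless $\textbf{e}_u$ is negative and reaches $-h$. In that case, $X^x$ hits $0$ for the first time at $\sigma_u := \tau_{u-} + T^{-h}(\textbf{e}_u)$. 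By pathwise uniqueness for \eqref{E:defXx} applied with the shifted Brownian motion $W_r := B_{\sigma_u + r} - B_{\sigma_u}$, the post-hitting trajectory $\tilde X_r := X^x_{\sigma_u + r}$ is the skew Brownian motion with parameter $\beta_2$ starting at $0$ driven by $W$. The same identification for $X^0$ gives $X^0_{\sigma_u + r} = -h + W_r$, so the excursion of $X^0$ terminates exactly at $T^h(W)$. This leads me to set
$$
\ell(h, \textbf{e}_u) := L^0_{T^h(W)}(\tilde X) \,\ind_{\{\textbf{e}_u \text{ reaches } -h\}},
$$
which is a measurable function of $h$ and of $\textbf{e}_u$ (through its post-$(-h)$-hitting tail).

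To justify \eqref{E:L0_somme_ell} I would argue that the measure $dL^0(X^x)$ is carried by $\{X^x = 0\}$, while on $[0, \tau_t] \subset [0, T^\star)$ (this inclusion follows from $t < U^\star$) the simultaneous equality $X^0_s = X^x_s = 0$ is impossible. Consequently, the zero set of $X^0$ inside $[0, \tau_t]$ bears no $dL^0(X^x)$-mass, and
$$
L^0_{\tau_t}(X^x) = \sum_{0 < u \le t} \bigl(L^0_{\tau_u}(X^x) - L^0_{\tau_{u-}}(X^x)\bigr) = \sum_{0 < u \le t} \ell(X^x_{\tau_{u-}}, \textbf{e}_u),
$$
which is the claimed representation.

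For the law \eqref{E:law_ell}, I would first apply \eqref{E:n_nBM} and \eqref{E:n_reaches} to obtain $\textbf{n}_{\beta_1}(\textbf{e}\text{ reaches }-h) = \tfrac{1-\beta_1}{2h}$, and then invoke \eqref{E:fundamental_pte_n} (with $\beta = \beta_1$) to identify, conditionally on this event, the shifted post-hitting trajectory $W$ with a standard Brownian motion run up to $T^h(W)$. The problem reduces to showing that for $W$ standard Brownian motion and $\tilde X$ the skew Brownian motion with parameter $\beta_2$ driven by $W$,
$$
\mathbb{P}\bigl(L^0_{T^h(W)}(\tilde X) \ge a\bigr) = \left(1 + \frac{\beta_2 a}{h}\right)^{-\frac{1+\beta_2}{2\beta_2}}.
$$
This I would prove by applying It\^o excursion theory to $\tilde X$ itself. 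Since $W = \tilde X - \beta_2 L^0(\tilde X)$ and $L^0(\tilde X)$ is constant on each excursion of $\tilde X$, during the excursion at local time level $\lambda$ the process $W$ is just the excursion shifted down by $\beta_2 \lambda$; hence $W$ reaches $h$ during that excursion iff the excursion is positive and attains height $h + \beta_2 \lambda$. The event $\{L^0_{T^h(W)}(\tilde X) \ge a\}$ thus coincides with the event that the Poisson point process of excursions of $\tilde X$ (intensity $d\lambda \otimes \textbf{n}_{\beta_2}$) has no positive excursion at level $\lambda \in (0, a]$ attaining height $h + \beta_2 \lambda$. Using $\textbf{n}_{\beta_2}(\text{pos. exc. reaches }h') = \tfrac{1+\beta_2}{2 h'}$, the exponential formula gives
$$
\mathbb{P}\bigl(L^0_{T^h(W)}(\tilde X) \ge a\bigr) = \exp\!\left(-\int_0^a \frac{1+\beta_2}{2(h+\beta_2\lambda)}\,d\lambda\right) = \left(1 + \frac{\beta_2 a}{h}\right)^{-\frac{1+\beta_2}{2\beta_2}},
$$
which combined with the conditioning probability $\tfrac{1-\beta_1}{2h}$ yields \eqref{E:law_ell}.

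The main obstacle is the interlocked double application of excursion theory --- once for $X^0$, to isolate the interval and the entrance data of the excursion in which $X^x$ first touches zero, and a second time for the skew Brownian motion $\tilde X$ describing $X^x$ afterwards. The decisive simplification is the observation that hitting the level $h$ by $W = \tilde X - \beta_2 L^0(\tilde X)$ happens only through \emph{positive} excursions of $\tilde X$, which converts the question about $W$ into a Poisson thinning event for $\tilde X$'s excursion process and unlocks the closed form via the exponential formula.
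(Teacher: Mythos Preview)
Your approach is the same as the paper's: decompose $L^0_{\tau_t}(X^x)$ over the excursion intervals of $X^0$, identify each increment as a functional of $(X^x_{\tau_{u-}}, \textbf{e}_u)$, and compute the law of that functional under $\textbf{n}_{\beta_1}$ via the Markov property \eqref{E:fundamental_pte_n} of the excursion measure combined with excursion theory for the $\beta_2$-skew Brownian motion $\tilde X$. Your computation of the law is essentially identical to the paper's Step~3.

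There is, however, one point where you move too fast. You assert that $\ell(h,\textbf{e}_u)=L^0_{T^h(W)}(\tilde X)$ is ``a measurable function of $h$ and of $\textbf{e}_u$'', appealing to pathwise uniqueness for \eqref{E:defXx} with driving noise $W$. But $W$ is the post-$(-h)$-hitting tail of a particular excursion path, and the skew equation $\tilde X_r = W_r + \beta_2 L^0_r(\tilde X)$ does \emph{not} admit a solution for every continuous driver --- strong existence and pathwise uniqueness are statements about Wiener-a.e.\ paths. To define $\ell$ as a genuine measurable map on $(0,\infty)\times\exspace$ (as the proposition requires), the paper first builds the solution $\mathcal{X}(\omega)$ on the canonical Wiener space, restricted to a full-$\mathbb{W}$-measure set $\widehat\Omega$, and then transports this to the excursion space via \eqref{E:fundamental_pte_n}, which guarantees that the post-$(-h)$-hitting tail of $\textbf{e}$ falls in the good set $\widehat\Omega^h$ for $\textbf{n}_{\beta_1}$-a.e.\ excursion reaching $-h$; outside that set $\ell$ is set to zero by convention. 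The paper then checks, via the Master Formula and a recursive reconstruction of $X^x$, that this exceptional set is avoided \emph{simultaneously} for all the random excursions $\textbf{e}_u$, $u<U^\star$ --- a nontrivial point, since there are countably many $u$'s at which the construction must succeed. Your argument implicitly assumes all of this; filling it in is exactly the content of the paper's Steps~1--2.
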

\begin{proof}
Before turning to a rigorous proof, let us give some insight about the representation \eqref{E:L0_somme_ell}.
Assume $\tau_u-\tau_{u-}>0$, then using \eqref{E:defX0},  we have for $r \le R(\textbf{e}_u)$,
\begin{align*}
\textbf{e}_u(r)=X^0_{\tau_{u-}+r}
&=
 X^0_{\tau_{u-}}+ B_{\tau_{u-}+r}-B_{\tau_{u-}}+\beta_1
[ L^{0}_{\tau_{u-}+r}(X^0)-L^{0}_{\tau_{u-}}(X^0)]
\\
&= B_{\tau_{u-}+r}-B_{\tau_{u-}}.
\end{align*}
Recalling \eqref{E:defXx}, we deduce,
\begin{align}
\nonumber
X^x_{\tau_{u-}+r}
&=
 X^x_{\tau_{u-}}+ B_{\tau_{u-}+r}-B_{\tau_{u-}}+\beta_2
[ L^{0}_{\tau_{u-}+r}(X^x)-L^{0}_{\tau_{u-}}(X^x) ]
\\ \nonumber
&= 
 X^x_{\tau_{u-}}+ \textbf{e}_u(r)
 +\beta_2[L^{0}_{\tau_{u-}+r}(X^x)-L^{0}_{\tau_{u-}}(X^x)]
 \\ &=
 \label{E:eq_sBm_e_u}
  X^x_{\tau_{u-}}+ 
  \textbf{e}_u(r)
 + \beta_2 L^{0}_{r}(X^x_{\tau_{u-}+\cdot}).
 \end{align}
The relation \eqref{E:eq_sBm_e_u} shows that $(X^x_{\tau_{u-}+r})_{r<R(\textbf{e}_u)}$ satisfies a skew Brownian motion type of equation, but
 governed by the excursion path $\textbf{e}_u$, and starting from the value $X^x_{\tau_{u-}}$. 
By solving this equation, we will show that the process $(X^x_{\tau_{u-}+r})_{r<R(\textbf{e}_u)}$ can be obtained as a functional of the excursion 
$\textbf{e}_u$ and of the initial value $X^x_{\tau_{u-}}$.
 As a consequence the local time $L^{0}_{\tau_{u}}(X^x)-L^{0}_{\tau_{u-}}(X^x)$ will be written too as
a functional $\ell(X_{\tau_{u-}^x},\textbf{e}_u)$.
We give a rigorous proof of these facts in the first two steps below. 
Remark that in a general way,
it is not true that the equation $\widehat{X}_r = h + \textbf{e}(r)+ \beta_2 L^0_r(\widehat{X})$ admits a unique
solution for all $h \in \mathbb{R}$ and all $\textbf{e} \in \exspace$. This makes the rigorous construction a bit delicate.  

%
%
%

{\bf Step 1: Construction of solutions to the skew equation driven by an excursion.}

Consider 
$\mathcal{C}([0,\infty),\mathbb{R})$ the canonical space endowed with 
$\mathbb{W}$ the measure of the standard Brownian motion (starting at zero).
Since the skew Brownian motion equation admits a unique strong solution,
we know that there exists a solution 
$(\mathcal{X}_r)_{r \ge 0}$ to the equation
\begin{equation} \label{E:skew_canonic}
\mathcal{X}_r(\omega)=
\omega_r + \beta_2 L^0_r(\mathcal{X}(\omega))
\end{equation}
as long as $\omega \in \widehat{\Omega}$ where $\widehat{\Omega}$ is some subset of
$\mathcal{C}([0,\infty),\mathbb{R})$
with $\mathbb{W}(\widehat{\Omega} )=1$.

For $h>0$, define $T^h(\omega)=\inf \{ u>0 \mid w_u =h\}$, one can easily see that,
for any $h>0$, the process 
$\mathcal{X}_{\cdot \wedge T^h(\omega) }(\omega)$ is some functional 
of $\omega_{\cdot \wedge T^h(\omega)}$
(it can be seen, for example, and up to restricting $\widehat{\Omega}$, using that \eqref{E:skew_canonic} is a limit of S.D.E. with smooth coefficients).
 With slight abuse of notation, we write
$\mathcal{X}_{\cdot \wedge T^h(\omega) }(\omega)=\mathcal{X}_{\cdot \wedge T^h(\omega) }(\omega_{\cdot \wedge T^h(\omega)} )$.
Define $\widehat{\Omega}^h= \{ (\omega_{r \wedge T^h(\omega)})_{r \ge 0} 
\mid \omega \in \widehat{\Omega} \}$, by construction,
\begin{equation}\label{E:W_omega_h}
\mathbb{W}\left( \left\{ \omega \in \mathcal{C}([0,\infty),\mathbb{R}) \mid \omega_{\cdot \wedge T^h(\omega)} \in \widehat{\Omega}^h 
 \right\} \right)
=\mathbb{W}( \widehat{\Omega})=1.
\end{equation}
   
Now, we construct a solution of the skew equation driven by the 'generic' excursion $\textbf{e}$ and starting from an
arbitrary value $h>0$ as follows.

%

For $h>0$ and $\textbf{e} \in \exspace$:
\begin{itemize}
\item  If $\textbf{e}$ does not reach $-h$ we simply
set 
\begin{equation} \label{E:def_X_hat1}
\widehat{X}_r(h,\textbf{e})=h + \mathbf{e}(r) \text{ for $r \in [0,R(\textbf{e})]$.}
\end{equation}

\item
If $\textbf{e}$ reaches $-h$, denote $T^{-h}(\textbf{e})=\inf\{r \mid \textbf{e}(r)=-h  \}$ and
$\omega^{-h}_{\cdot}=\textbf{e}(T^{-h}(\textbf{e})+\cdot)+h$.
We have $R(\textbf{e})-T^{-h}(\textbf{e})=T^{h}(\omega^{-h})$, and if
$w^{-h} \in \widehat{\Omega}^{h}$
we set,
\begin{equation} \label{E:def_X_hat2}
\widehat{X}_r(h,\mathbf{e})=
\begin{cases}
h + \textbf{e}(r), \quad \text{ for $r \le T^{-h}(\textbf{e})$}
\\
\mathcal{X}_{r-T^{-h}(\textbf{e})}(\omega^{-h}_{\cdot \wedge T^{h}(\omega^{-h})} ) 
\text{ for $ r \in (T^{-h}(\textbf{e}) ,R(\textbf{e})]
$}
\end{cases}.
\end{equation}
\item
If $\textbf{e}$ reaches $-h$ and $w^{-h} \notin \widehat{\Omega}^{h}$ we arbitrarily set 
\begin{equation} \label{E:def_X_hat3}
\widehat{X}_r( \textbf{e} ,h)=h \text{ for all $r \in [0,R(\textbf{e}))$.}
\end{equation}
\end{itemize}

Remark that \eqref{E:def_X_hat2} simply means that after the excursion reaches $-h$, we use the solution of the skew equation defined on the canonical space, treating the part of the excursion after $T^{-h}(\textbf{e})$ as the realisation of the Brownian motion when such construction is feasible.

Since for $\omega \in \widehat{\Omega}$, $\mathcal{X}(\omega)$ satisfies \eqref{E:skew_canonic}, and the local time of $\widehat{X}(\mathbf{e},h)$ does not increase before $T^{-h}(\textbf{e})$, we have 
$\forall \textbf{e} \in \exspace$, such that  
$\textbf{e}$ reaches
$-h$ with $\omega^{-h} \in \widehat{\Omega}^h$:
\begin{equation} \label{E:equation_X_hat}
\widehat{X}_r(h,\textbf{e})= h + \textbf{e}(r)+
\beta_2 L^{0}_r(\widehat{X}(h,\textbf{e})), \quad
\forall r \le R(\textbf{e}).
\end{equation}
Observe that if $\textbf{e}$ does not reach $-h$, it is immediate that the relation \eqref{E:equation_X_hat} is satisfied.

Hence, we can deduce that 
for $h>0$ fixed, the $\textbf{n}_{\beta_1}$ measure of the set of excursions $\textbf{e}$ where \eqref{E:equation_X_hat} does not hold is zero: indeed, using the fundamental property \eqref{E:fundamental_pte_n} of the excursion measure,
\begin{align}  \label{E:n_exceptionel_first}
\textbf{n}_{\beta_1}
\left( 
\textbf{e} (T^{-h}(\textbf{e})+\cdot)+h \notin \widehat{\Omega}^{h}
 \mid \textbf{e} \text{ reaches $-h$ } 
\right)&=
\textbf{n}_{\beta_1}\left( \omega^{-h}_{\cdot \wedge T^{h}(\omega^{-h})} 
\notin \widehat{\Omega}^{h} \mid \textbf{e} \text{ reaches $-h$ } 
\right)
\\ \label{E:n_exceptionel}
&=\mathbb{W}\left( \left\{ \omega \mid \omega_{\cdot \wedge T^h(\omega)} \notin \widehat{\Omega}^h 
 \right\} \right)
=0, 
\end{align}
where we have used \eqref{E:W_omega_h}.

We finally define the total local time of $\widehat{X}(h,\textbf{e})$ during the lifetime of the excursion by setting:
\begin{equation} \label{E:def_ell}
\ell(h, \textbf{e})=
\begin{cases}
0, ~ \text{ if $\textbf{e}$ does not reach $-h$,}
\\
L^0_{T^{h}(\omega^{-h})}(\mathcal{X}(\omega^{-h}_{\cdot \wedge T^{h}(\omega^{-h}) } ))=
L^0_{R(\textbf{e})}(\widehat{X}(h,\textbf{e})), ~
\text{ if  $\textbf{e}$ reaches $-h$ and $\omega^{-h} \in \widehat{\Omega}^{h}$,}
\\
0, ~  \text{ if  $\textbf{e}$ reaches $-h$ and $\omega^{-h} \notin \widehat{\Omega}^{h}$.}
\end{cases} 
\end{equation}

{\bf Step 2: Proof of the relation \eqref{E:L0_somme_ell}.}

For $s < T^\star$, we have $X^x_s > X^0_s$ and we deduce that the local time of $X^x$ does not increase
on the set $\{ s < T^\star \mid X^0_s=0 \}=
[0,T^\star)\setminus \cup_{u<U^\star} ]\tau_{u-},\tau_u[  $.
Hence, we have for $t < U^\star$ the relation
\begin{equation*}
L^{0}_{\tau_t}(X^x)= \sum_{0<u \le t}  [
L^{0}_{\tau_{u}}(X^x)-L^{0}_{\tau_{u-}}(X^x) ].
\end{equation*}
Now it is clear that \eqref{E:L0_somme_ell} will be proved if we show that almost surely,
\begin{equation} \label{E:delta_L_ell}
L^{0}_{\tau_{u}}(X^x)-L^{0}_{\tau_{u-}}(X^x)
=\ell(X^x_{\tau_{u-}}, \textbf{e}_u ), \quad 
\text{ for all $u$ with $\tau_{u}-\tau_{u-}>0$}.
\end{equation}

Actually, we will construct $\widetilde{X}^x$, a version of $X^x$, which satisfies the relation \eqref{E:delta_L_ell}. 
Let $U_{1}= \inf \{ u >0 \mid \textbf{e}_u \text{ reaches } -x+\beta_1  u \}$. Then $U_{1}$ is a 
$(\mathcal{G}_u)_{u \ge 0}$ stopping time and it is immediate that $U_1 <x/\beta_1$ almost surely.
We construct $\widetilde{X}^x$ on $[0,T_1]$ where $T_1=\tau_{U_1}$ in the following way.

For $t<\tau_{U_1 -}$ we set 
\begin{equation*}
\widetilde{X}^x_t=
\begin{cases}
x+e_s(t-\tau_{s-})-\beta_1 s, \text{ if $t \in (\tau_{s-},\tau_{s})$ with $s<U_1$,}
\\
x-\beta_1 L_t^0(X^0), \text{ if $t \in [0,\tau_{U_1-}) \setminus \cup_{s < U_1}
(\tau_{s-},\tau_s) = \{ v< \tau_{U_1-} \mid X_v^0=0 \}$,}
\end{cases}
\end{equation*}
and for $t \in [\tau_{U_1 -} ,\tau_{U_1}]$ we let
\begin{equation} \label{E:tildeX_saut}
\widetilde{X}^x_t= \widehat{X}_{t-\tau_{U_1 -}}(\widetilde{X}^x_{\tau_{U_1 -}}, \textbf{e}_{U_1}  )
=\widehat{X}_{t-\tau_{U_1 -}}(x-\beta_1 U_1, \textbf{e}_{U_1}  ),
\end{equation}
where $\widehat{X}$ was defined in \eqref{E:def_X_hat1}--\eqref{E:def_X_hat3}.
Note that, with this definition,
\begin{equation}
\label{E:inc_TL_Xhat}
L^0_t(\widetilde{X}^x)-L^0_{\tau_{U_1-}}(\widetilde{X}^x)
=
L^0_{t-\tau_{U_1-}}(\widehat{X}(x-\beta_1 U_1,\textbf{e}_{U_1})), 
\text{ for $t \in [\tau_{U_1-},\tau_{U_1}]$.}
\end{equation}

Let us check that $\widetilde{X}^x$ satisfies the skew equation.
By definition of $U_1$ we have $\widetilde{X}^x_t>0$ on $[0, \tau_{U_1-})$. Moreover,
$\textbf{e}_s(t-\tau_s)=X^0_t$ when $t \in (\tau_{s-},\tau_{s})$, together with \eqref{E:defX0} 
insures that $\widetilde{X}^x_t=x+B_t $ for $t \in [0, \tau_{U_1-})$. As a result, $\widetilde{X}^x$
satisfies the skew equation on $[0, \tau_{U_1-})$.

We now focus on the interval $[\tau_{U_1-},\tau_{U_1})$. First, using 
the so-called ``Master Formula'' (Proposition 1.10 page 475 in \cite{RevYor}) we get
\begin{multline*}
\mathbb{E} \left[
\sum_{0<s<\frac{x}{\beta_1}} 
1_{ \{ \textbf{e} \text{ reaches $-x+\beta_1 s$} \}}
1_{\{ \textbf{e}(T^{-x+\beta_1 s}(\textbf{e}) + \cdot  ) +  x-\beta_1 s \notin \widehat{\Omega}^{x-\beta_1 s} \}}
\right]
\\ =
\int_0^{\frac{x}{\beta_1}} 
\textbf{n}_{\beta_1} ( \textbf{e} \text{ reaches $-x+\beta_1 s$ };\quad
\textbf{e}(T ^{-x+\beta_1 s}(\textbf{e}) + \cdot  ) +  x-\beta_1 s \notin 
\widehat{\Omega}^{x-\beta_1 s}\}
 ) \dd s
\end{multline*} 
and the latter integral is zero from \eqref{E:n_exceptionel_first}--\eqref{E:n_exceptionel}.
Hence, we deduce that with probability one, for all $u < \frac{x}{\beta_1}$ the relation \eqref{E:equation_X_hat} holds true with $\mathbf{e}$ replaced by $\mathbf{e}_u$ and $h$ replaced by $x-\beta_1 u$. 
As a consequence, it holds true, almost surely, for the excursion occurring at the random time $U_1$.
This yields to the following relation
for $t \in [\tau_{U_1-},\tau_{U_1}]$,
\begin{align*}
\widetilde{X}^x_t&=\widetilde{X}^x_{\tau_{U_1-}} + \textbf{e}_{U_1}(t-\tau_{U_1-})+
\beta_2 L^0_{t-\tau_{U_1-}}( \widehat{X}(x-\beta_1 U_1, \textbf{e}_{U_1}))
\\
&=\widetilde{X}^x_{\tau_{U_1-}} + \textbf{e}_{U_1}(t-\tau_{U_1-})+
\beta_2 [L^0_{t}( \widetilde{X}^x) - L^0_{\tau_{U_1-}}( \widetilde{X}^x)],
\quad \text{ by \eqref{E:inc_TL_Xhat}}
\\
&=\widetilde{X}^x_{\tau_{U_1-}} + X^0_t+
\beta_2 [L^0_{t}( \widetilde{X}^x) - L^0_{\tau_{U_1-}}( \widetilde{X}^x)],
\quad \text{ by definition of the excursion process}
\\
&=\widetilde{X}^x_{\tau_{U_1-}} + B_t + \beta_1 L^0_t(X^0)+
\beta_2 [L^0_{t}( \widetilde{X}^x) - L^0_{\tau_{U_1-}}( \widetilde{X}^x)],
\quad \text{ by \eqref{E:defX0}}
\\
&=x +B_t + \beta_2 L^0_{t}( \widetilde{X}^x),
\end{align*}
where we have used $\widetilde{X}^x_{\tau_{U_1-}}=x-\beta_1 U_1$, 
$L^0_t(X^0)=U_1$, and $L^0_{\tau_{U_1-}}( \widetilde{X}^x)=0$ in the last line.
This completes the proof that $\widetilde{X}^x$ almost surely satisfies the 
 skew equation on $[0, T_1]$.
 Using pathwise uniqueness for the skew equation, we deduce that $\widetilde{X}^x=X^x$ on $[0,T_1]$, almost surely.

Then, by the definition of the functional $\ell$ in the first step of the proof, and by the construction of $\widetilde{X}$, we see that the condition \eqref{E:delta_L_ell} holds true for $u \le U^1$. We deduce
 $L^0_{\tau_t}(X^x)=\sum_{ 0<u \le t } \ell(X^x_{\tau_{u-}},\textbf{e}_u)$
for $t \le U_1$. 
Remark that, since $t \le U_1$, the only possible non zero term in this sum is $\ell(X^x_{\tau_{U_1-}},\textbf{e}_{U_1})$.

The process $\widetilde{X}^x$ is then constructed recursively.
For $i\ge 1$, we let $U_{i+1}=\inf\{ u > U_i \mid
\textbf{e}_u \text{ reaches } -\widetilde{X}^x_{T_i} + \beta_1 (u - U_i) \}$, we set 
$T_{i+1}=\tau_{U_{i+1}}$. We define $\widetilde{X}^x$ on $[T_i,T_{i+1}]$ as follows,
\begin{align*}
&\text{ if $t \in [T_i, \tau_{U_{i+1}-})$ with $t \in (\tau_{s-},\tau_{s})$ for some $s$:} \quad 
\widetilde{X}^x_t=\widetilde{X}^x_{T_i}+\textbf{e}_s(t-\tau_{s-})-\beta_1(s-U_i)\\
&\text{ if $t \in [T_i, \tau_{U_{i+1}-})$ with $X^0_t=0$:} \quad 
\widetilde{X}^x_t=\widetilde{X}^x_{T_i}-\beta_1(L_t^0(X^0)-U_i)\\
&\text{ if $t \in [\tau_{U_{i+1}-},T_{i+1}]$:} \quad 
\widetilde{X}^x_t
\begin{array}[t]{l}
=\widehat{X}^x_{t-\tau_{U_{i+1}-}} (\widetilde{X}_{\tau_{U_{i+1}-}},\textbf{e}_{U_{i+1}})\\
= \widehat{X}^x_{t-\tau_{U_{i+1}-}} (\widetilde{X}_{T_i}-\beta_1(U_{i+1}-U_{i}),\textbf{e}_{U_{i+1}}).
\end{array}
\end{align*}
Then, with arguments similar to the one used on $[0,T_1]$, 
we can prove
that $\widetilde{X}^x$ satisfies the skew equation on $[T_{i},T_{i+1}]$ and consequently, if $X^x$ and $\widetilde{X}^{x}$ coincide at the instant $T_i$, they must coincide almost surely on $[T_i,T_{i+1}]$.

Using a recursion argument, we construct a process $\widetilde{X}^x$ on 
$[0, \sup_i T_i)$, which is a.s. equal to $X^x$. Moreover by construction the relation
\eqref{E:delta_L_ell} is valid for $u < \sup_{i} U_i$. 
To get \eqref{E:L0_somme_ell} we need to 
check that, almost surely,  $\sup_{i} U_i=U^\star$ or equivalently that,
$\sup_i T_i=T^\star=\inf \{ t>0 \mid X^x_t=X^0_t\}$.  

This is immediate if $\sup_{i \ge 1} T_i=\infty$. Assume, by contradiction that the set
$\{\sup_{i \ge 1} T_i<\infty; \sup_{i \ge 1} T_i<T^\star \}$ does not have
probability zero.
On this set, we have $\widetilde{X}^x_t-X^0_t=X^x_t-X^0_t \ge \varepsilon>0$ for some random $\varepsilon$
and $t$ belonging to some random left-neighbourhood of $\sup_{i \ge 1} T_i$. But, it can be seen, from
the definition of the jump times $U_i$, that there is only a finite number of jumps when
 $\widetilde{X}^x_t-X^0_t$ remains above the level $\varepsilon$ (see too Remark \ref{R:activ_finite} below).
This is in contradiction with the existence of the accumulation point $\sup_i {T_i}$, and as a result we deduce that $\sup_i T_i=T^\star$.

{\bf Step 3: Law of $\textbf{e} \mapsto \ell(h,\textbf{e})$ under the excursion measure 
($h>0$ fixed).}

Let $a>0$, using the fundamental property \eqref{E:fundamental_pte_n} of the excursion measure
and the definitions \eqref{E:skew_canonic} \eqref{E:def_ell},
 we have
\begin{equation} \label{E:eq1_step3}
\frac{\textbf{n}_{\beta_1}\left( \ell(h,\textbf{e}) > a; ~ \text{ \textbf{e} reaches $-h$} \right)}
{\textbf{n}_{\beta_1}\left( \text{ \textbf{e} reaches $-h$} \right)}
=
\mathbb{W}\left( L^0_{T^h(\omega)}(\mathcal{X}(\omega)) > a \right),
\end{equation}
where $\mathcal{X}$ solves $\mathcal{X}_r=\omega_r+\beta_2 L_r^0(\mathcal{X})$ and
$(\omega_r)_{r\ge 0}$ is a standard Brownian motion under $\mathbb{W}$.
We can compute
\begin{align*}
\mathbb{W}\left( L^0_{T^h(\omega)}(\mathcal{X}(\omega)) > a \right)
&=
\mathbb{W}\left( \text{no excursion of $\mathcal{X}_\cdot$ crossed over 
$h +\beta_2 L^0_\cdot(\mathcal{X})$ before $L^0_\cdot(\mathcal{X})$ reaches $a$} \right)
\\
&=
\exp \left( - \int_0^a \textbf{n}_{\beta_2} \left[ \textbf{e} \text{ reaches level } (h+\beta_2 u) \right] \dd u 
\right)
\end{align*}
where in the last line we have used that the measure of excursion of the process $\mathcal{X}$ is 
$\textbf{n}_{\beta_2}$, together with standard computations on Poisson processes.
Recalling \eqref{E:n_nBM}--\eqref{E:n_reaches}, we have
$\textbf{n}_{\beta_2} [ \textbf{e} \text{ reaches level } (h+\beta_2 u)]
=\frac{1+\beta_2}{2(h+\beta_2 u)}$ and
 we easily get that
\begin{equation} \label{E:W_step3}
\mathbb{W}\left( L^0_{T^h(\omega)}(\mathcal{X}(\omega)) > a \right)=
\left( 1+ \frac{\beta_2 a}{h} \right)^{-\frac{1+\beta_2}{2\beta_2}}.
\end{equation}
Finally, remark that 
\begin{align*}
\textbf{n}_{\beta_1}\left( \ell(h,\textbf{e}) > a  \right)
&=\textbf{n}_{\beta_1}\left( \ell(h,\textbf{e}) > a; ~ \text{ \textbf{e} reaches $-h$} \right)
\\
&=
\textbf{n}_{\beta_1} \left( \text{ \textbf{e} reaches $-h$} \right)
\left( 1+ \frac{\beta_2 a}{h} \right)^{-\frac{1+\beta_2}{2\beta_2}}, \quad
\text{using \eqref{E:eq1_step3}--\eqref{E:W_step3},}
\\
&=
\frac{1-\beta_1}{2 h} \left( 1+ \frac{\beta_2 a}{h} \right)^{-\frac{1+\beta_2}{2\beta_2}},
\quad 
\text{by \eqref{E:n_nBM}--\eqref{E:n_reaches}.}
\end{align*}
The proof of the proposition is completed.
\end{proof}

%
%
%
%

\subsection{Representation of the ``distance process'' as a jump Markov process (proof of 
Theorem \ref{T:SDEJump_main})}
Clearly the Theorem \ref{T:SDEJump_main} is an immediate consequence of \eqref{E:law_ell} and of the
equation \eqref{E:EDS_Z_ell} in the next result.
\begin{prop}\label{P:EDS_Z}
We have for all $t < U^\star$,
\begin{align}\label{E:EDS_Z_ell}
Z_t^x &= x-\beta_1 t + \sum_{0<u\le t} \beta_2 \ell(Z_{u-}^x,{\rm{\bf e}}_u)
\\  \label{E:EDS_Z_usual}
&=x-\beta_1 t + \int_{[0,t]\times(0,\infty)}
a \mu(\dd u, \dd a) , 
\end{align}
where $\mu(\dd u,\dd a)$ is the random jumps measure of $Z^x$ on $[0,U^\star)\times(0,\infty)$.
The compensator of the measure $\mu(\dd u,\dd a)$ is $\dd u \times \nu(Z^x_{u-},\dd a)$ with
\begin{equation} \label{E:def_compensator}
\nu(h,\dd a)= \frac{\kappa}{h^2} \left(1+\frac{a}{h}\right)^{-\gamma} \ind_{\{a >0 \}} \dd a
\end{equation}
where $\kappa=\frac{(1-\beta_1)(1+\beta_2)}{4 \beta_2}$ and $\gamma=\frac{1+3\beta_2}{2\beta_2}$.
\end{prop}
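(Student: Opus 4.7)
The plan is to deduce the representation \eqref{E:EDS_Z_ell} directly from Proposition \ref{P:L0_somme_ell} and the defining equation for $X^x$, then interpret the sum as an integral against the jump measure $\mu$ and identify its compensator via the Poisson point process structure of the excursions.

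First, by evaluating \eqref{E:defXx} at $t=\tau_u$ I would write
$$Z^x_u=X^x_{\tau_u}=x+B_{\tau_u}+\beta_2 L^0_{\tau_u}(X^x).$$
Using \eqref{E:defX0} at the same time, together with the identities $X^0_{\tau_u}=0$ and $L^0_{\tau_u}(X^0)=u$, one gets $B_{\tau_u}=-\beta_1 u$. Plugging this in,
$$Z^x_u=x-\beta_1 u+\beta_2 L^0_{\tau_u}(X^x).$$
Since $X^x$ is continuous in time, the left limit $Z^x_{u-}=\lim_{v\uparrow u}X^x_{\tau_v}$ equals $X^x_{\tau_{u-}}$, so Proposition \ref{P:L0_somme_ell} gives $L^0_{\tau_t}(X^x)=\sum_{0<u\le t}\ell(Z^x_{u-},\mathbf{e}_u)$ for $t<U^\star$, yielding \eqref{E:EDS_Z_ell}. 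Equation \eqref{E:EDS_Z_usual} is then just the rewriting of the finite sum as the integral against the jump measure $\mu(\dd u,\dd a)=\sum_{u>0}\ind_{\{\Delta Z^x_u>0\}}\delta_{(u,\Delta Z^x_u)}(\dd u,\dd a)$.

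For the compensator, I would exploit the fact that $(\mathbf{e}_u)_{u>0}$ is a Poisson point process with intensity $\dd u\otimes\mathbf{n}_{\beta_1}(\dd\mathbf{e})$ with respect to $(\mathcal{G}_u)$. Since $Z^x_{u-}$ is $\mathcal{G}_{u-}$-measurable and the jump $\Delta Z^x_u=\beta_2\ell(Z^x_{u-},\mathbf{e}_u)$ is a predictable functional of the excursion, a standard marking argument (or direct application of the compensation formula, e.g. Theorem 1.8 p.~475 in \cite{RevYor}) shows that $\mu$ has compensator $\dd u\,\nu(Z^x_{u-},\dd a)$, where $\nu(h,\cdot)$ is the image of the excursion measure $\mathbf{n}_{\beta_1}$ under the map $\mathbf{e}\mapsto\beta_2\ell(h,\mathbf{e})$, restricted to $(0,\infty)$.

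Finally, to identify $\nu$ explicitly, I would apply \eqref{E:law_ell}: for $a>0$,
$$\nu(h,(a,\infty))=\mathbf{n}_{\beta_1}\bigl(\beta_2\ell(h,\mathbf{e})>a\bigr)=\frac{1-\beta_1}{2h}\Bigl(1+\frac{a}{h}\Bigr)^{-\frac{1+\beta_2}{2\beta_2}},$$
and differentiate in $a$ to obtain the density
$$\nu(h,\dd a)=\frac{(1-\beta_1)(1+\beta_2)}{4\beta_2 h^{2}}\Bigl(1+\frac{a}{h}\Bigr)^{-\frac{1+3\beta_2}{2\beta_2}}\ind_{\{a>0\}}\dd a,$$
which matches \eqref{E:def_compensator} with the stated $\kappa$ and $\gamma$. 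The main subtlety I expect is the rigorous justification that $\mu$'s compensator really is obtained by pushing $\mathbf{n}_{\beta_1}$ forward through the predictable mark $\beta_2\ell(Z^x_{u-},\cdot)$; this requires checking that $\ell$ is jointly measurable (already established in Proposition \ref{P:L0_somme_ell}) and that one can legitimately apply the compensation formula to the marked Poisson process $(u,\mathbf{e}_u)$ with the $\mathcal{G}_{u-}$-predictable state $Z^x_{u-}$, which is standard once integrability on $[0,t]$ with $t<U^\star$ is verified using the strict positivity of $Z^x_{u-}$.
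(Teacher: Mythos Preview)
Your proposal is correct and follows essentially the same route as the paper: evaluate \eqref{E:defXx} at $\tau_t$, use \eqref{E:defX0} to identify $B_{\tau_t}=-\beta_1 t$, then invoke Proposition~\ref{P:L0_somme_ell} for the jump sum, and finally read off the compensator from \eqref{E:law_ell}. Your version is in fact slightly more detailed than the paper's (you make explicit the identification $Z^x_{u-}=X^x_{\tau_{u-}}$ and spell out the marking/compensation argument and the differentiation of the tail), whereas the paper simply declares \eqref{E:def_compensator} to be a ``direct consequence'' of \eqref{E:law_ell}.
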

\begin{proof}
Using \eqref{E:defXx} we have $Z^x_t=X^x_{\tau_t}=x+B_{\tau_t}+\beta_2 L^0_{\tau_t}(X^x)$. Now from \eqref{E:defX0}, $0=X^0_{\tau_{t}}=B_{\tau_t}+ \beta_1 t$ and we deduce
\begin{equation*}
Z^x_t=x - \beta_1 t + \beta_2 L^0_{\tau_t}(X^x).
\end{equation*}
Hence the relation \eqref{E:L0_somme_ell} yields to \eqref{E:EDS_Z_ell}. 

The representation \eqref{E:EDS_Z_usual} is just the usual way to rewrite the stochastic differential equation with jumps: we transform  \eqref{E:EDS_Z_ell} into \eqref{E:EDS_Z_usual} by defining $\mu(\dd u,\dd a)$ as 
the sum of Dirac masses 
$\displaystyle 
\sum_{u<U^\star, Z^x_{u-}\neq Z_u^x } \delta_{(u,\Delta Z_u)}$,
and 
\eqref{E:def_compensator} appears as a direct consequence of \eqref{E:law_ell}.
\end{proof}
\begin{rem} \label{R:jump_law}
From Proposition \ref{P:EDS_Z}, we deduce that the rate for the jumps of $Z^x$ is given by,
\begin{equation} \label{E:prob_jump}
\mathbb{P}(Z^x \text{ jumps  on $[t,t+\dd t]$} \mid Z^x_{t-}=h)=\frac{1-\beta_1}{2h} \dd t.
\end{equation}
Conditionaly on $Z^x_{t-}=h$, the law of the jumps is given by
\begin{equation}\label{E:dens_jump}
\frac{1+\beta_2}{2 \beta_2 h} \left( 1+\frac{a}{h} \right)^{-\gamma} 1_{(0,\infty)}(a) \dd a.
\end{equation}
Remark that the jumps intensity of the process $Z^x$ is proportional to $1/Z^{x}$.
If a jump occurs at time $t$, then the size of the jump is proportional to $Z_{t-}^x$. 
Informally, $\Delta Z^x_t  \overset{\text{law}}{=} Z^x_{t-} J$ where $J$ has the density \eqref{E:dens_jump} with $h=1$.
\end{rem}
\begin{rem} \label{R:activ_finite}
As a consequence of Remark \ref{R:jump_law}, the number of jumps on $[0,t]$ is finite for $t < U^\star$ since the jump activity is bounded when $Z^x>\varepsilon$. 
\end{rem}
From Proposition \ref{P:EDS_Z}, we can deduce that $(Z_t^x)_{t < U^\star}$ is a local submartingale (resp. supermartingale) if $\beta_2>\beta_1$ (resp. $\beta_2<\beta_1$). 
Indeed, with simple computations 
$\int_{\exspace} \beta_2 \ell(h,\textbf{e}) \dd \textbf{n}_{\beta_1}(\textbf{e}) = \int_0^\infty  a \nu(h, \dd a)=
\beta_2 \frac{1-\beta_1}{1-\beta_2}$ is independent of $h$.
Hence, we can write
\begin{align*}
Z^x_t&=x-\beta_1 t + \beta_2 \frac{1-\beta_1}{1-\beta_2} t
+
\sum_{u \le t} \beta_2 \ell(Z^x_{u-},\textbf{e}_u) - \int_0^t 
\int_{\exspace} \beta_2 \ell(Z^x_{u-},\textbf{e}) \dd \textbf{n}_{\beta_1} ( \textbf{e}) \dd u
\\
&:= x +\frac{\beta_2-\beta_1}{1-\beta_2} t + M_t,
\end{align*}
where $(M_t)_t$ is a compensated jump process, and hence a local martingale. Remark that for $\beta_1=\beta_2$ the process $Z^x$ is a local martingale.

\section{Hitting time of the two skew Brownian motions (case $0<\beta_1,\beta_2<1$, $\beta_1 > \frac{\beta_2}{1+2\beta_2}$)}\label{S:Hitting}
In this section we prove the results of Section \ref{S:Main} corresponding to
 the situation $0<\beta_1,\beta_2<1$, $\beta_1 > \frac{\beta_2}{1+2\beta_2}$.
We start by giving a new proof of Theorem \ref{T:coalescence} (\cite{BarBur_etal01, BurChe01})
 relying on the dynamic of the process $Z^x$ .
\subsection{Finiteness of the hitting time (proof of Theorem \ref{T:coalescence})}
Let us show that if $\beta_1 > \frac{\beta_2}{1+2\beta_2}$ then $U^\star < \infty$ almost surely. We apply Ito's formula to the semi-martingale $\ln (Z^x_t)$ for $t < U^\star$,
\begin{align} \nonumber
\ln (Z^x_t)&= \ln (x) + \int_0^t \frac{ \dd Z^x_u}{Z^x_{u-}}
+
\sum_{u \le t} \left\{ \ln(Z^x_{u-}+\Delta Z^x_u) - \ln(Z^x_{u-}) - \frac{\Delta Z^x_u}{ Z^x_{u-}} \right\}
\\ &= \label{E:ln_Z}
\ln(x) - \int_0^t \frac{ \beta_1 \dd u}{Z^x_{u}} 
+ \sum_{u \le t} \ln \left( 1 + \frac{ \Delta Z^x_u}{Z^x_{u-}} \right) \text{ by \eqref{E:EDS_Z_ell}.}
\end{align}
Consider the jump process $\ds \mathcal{J}_t=\sum_{u \le t} \ln \left( 1+  \frac{ \Delta Z^x_u}{Z^x_{u-}} \right)
=\sum_{u \le t} \ln \left( 1+  \frac{\beta_2 \ell(Z^x_{u-},\textbf{e}_u ) }{Z^x_{u-}} \right)$. Its compensator can be easily computed using \eqref{E:EDS_Z_ell}--\eqref{E:def_compensator}. Indeed we have,
\begin{equation*}
\int_{\exspace} \ln \left( 1+  \frac{\beta_2 \ell(h,\textbf{e} ) }{h} \right)
\dd \textbf{n}_{\beta_1}(\textbf{e})
=
\int_0^\infty \ln \left( 1+  \frac{a}{h} \right) \nu(h,\dd a)=\frac{(1-\beta_1)\beta_2}{(1+\beta_2)h},
\end{equation*}
and hence $\ds \widetilde{\mathcal{J}}_t=\mathcal{J}_t- \frac{(1-\beta_1)\beta_2}{(1+\beta_2)} \int_0^t 
\frac{\dd u}{Z^x_u}$ is a compensated jump process. 
Using \eqref{E:ln_Z}, we can write
\begin{equation} \label{E:ln_Z_J}
\ln(Z^x_t)=\ln(x)+ \theta \int_0^t \frac{\dd u}{Z^x_u} + \widetilde{\mathcal{J}}_t,
\end{equation}
with $\theta=\frac{(1-\beta_1)\beta_2}{(1+\beta_2)}-\beta_1=
\frac{\beta_2-\beta_1(1+2\beta_2)}{1+\beta_2}<0$.

The process $\widetilde{\mathcal{J}}$ is a quadratic pure jumps local martingale and its bracket is clearly given by
\begin{equation*}
[\widetilde{\mathcal{J}},\widetilde{\mathcal{J}} ]_t=
\sum_{u \le t} \ln \left( 1+  \frac{\beta_2 \ell(Z^x_{u-},\textbf{e}_u ) }{h} \right)^2.
\end{equation*}
We deduce an expression for 
$\left\langle  \widetilde{\mathcal{J}},\widetilde{\mathcal{J}}\right\rangle_t$ by computing,
with the help of \eqref{E:def_compensator},
$\int_{\exspace} \ln \left( 1+  \frac{\beta_2 \ell(h,\textbf{e} ) }{h} \right)^2
\dd \textbf{n}_{\beta_1}(\textbf{e})
=
\int_0^\infty \ln \left( 1+  \frac{a}{h} \right)^2 \nu(h,\dd a)=\frac{c}{h}$ for some
constant $c>0$.
It follows $\left\langle  \widetilde{\mathcal{J}},\widetilde{\mathcal{J}}\right\rangle_t
=c\int_0^t \frac{\dd u}{Z^x_u}$. Using \eqref{E:ln_Z_J}, we get
\begin{equation}\label{E:explo_stoch}
\frac{\ln(Z^x_t)}{\left\langle  \widetilde{\mathcal{J}},\widetilde{\mathcal{J}}\right\rangle_t}
=
\frac{\ln(x)}{\left\langle  \widetilde{\mathcal{J}},\widetilde{\mathcal{J}}\right\rangle_t}+
\frac{\theta}{c}+
\frac{\widetilde{\mathcal{J}}_t}{\left\langle  \widetilde{\mathcal{J}},\widetilde{\mathcal{J}}\right\rangle_t}.
\end{equation}

Suppose now we are on the event $\{U^\star=\infty\}$. Then, either 
$\ds \{ \int_0^\infty \frac{\dd s}{Z_s^x} < \infty \}$ or 
$\ds \{ \int_0^\infty \frac{\dd s}{Z_s^x} = \infty \}$.

On the set $\ds \{ \int_0^\infty \frac{\dd s}{Z_s^x} = \infty \}$, we have 
$\left\langle  \widetilde{\mathcal{J}},\widetilde{\mathcal{J}}\right\rangle_\infty=\infty$ and using Kronecker's lemma
 (see Lemma \ref{L:Kronecker} in the Appendix),
\begin{equation*}
\frac{\ln(Z^x_t)}{\left\langle  \widetilde{\mathcal{J}},\widetilde{\mathcal{J}}\right\rangle_t}
\xrightarrow{t \to \infty} \frac{\theta}{c}<0.
\end{equation*}
Thus, $\mathbb{P}$ a.s. on the set $\left\langle  \widetilde{\mathcal{J}},\widetilde{\mathcal{J}}\right\rangle_\infty=\infty$, there exist
$t_0$ and $\eta>0$ such that $\ln(Z_t^x) \le -\eta \int_0^t \frac{\dd u}{Z^x_u}$ for all $t\ge t_0$. In view of Lemma \ref{L:eq_explosive} in the Appendix this is not possible.

On the set $\ds \{ \int_0^\infty \frac{\dd u}{Z_u^x} < \infty \}=
\{ \left\langle  \widetilde{\mathcal{J}},\widetilde{\mathcal{J}}\right\rangle_\infty<\infty\} $, using 
Kronecker's lemma and \eqref{E:explo_stoch} we see that $\ln (Z^x_t)$ converges as $t\to \infty$. This is clearly in contradiction with the finiteness of the integral $\int_0^\infty \frac{\dd u}{Z^x_u}$.

Hence, by contradiction, we have proved that $ U^\star<\infty $ a.s. and thus the Theorem \ref{T:coalescence} is shown.

\subsection{Computation of the law of hitting time (proof of Theorem \ref{T:law_U})}
As we already said, the main tool in order to compute the law of $U^\star$ is $Z^x$.
Let us define $A$ the generator of the process $Z^x$ by
\begin{align}\label{E:exp_generateur_abstrait}
A f (h)&= -\beta_1 f'(h)+
\int_0^\infty [ f(h+a)-f(h)] \nu(h,\dd a), 
\\ \label{E:exp_generateur}
&=-\beta_1 f'(h)+
\int_0^\infty [ f(h+a)-f(h)] \frac{\kappa}{h^2} \left( 1 + \frac{a}{h} \right)^{-\gamma} \dd a,
\end{align}
for $h>0$ and $f$ an element of $\mathcal{C}^1(0,\infty)$ bounded on $[0,\infty)$. 
Using the representation \eqref{E:EDS_Z_ell}--\eqref{E:def_compensator}, it is clear that $f(Z^x_t)-
\int_0^t Af(Z^x_u) \dd u$ with $t< U^\star$ is a compensated jump process and thus a local martingale. 

 
Before turning to the heart of the proof, we need to prove several lemmas in the next Section.

\subsection{Dynkin's formula}
 
Our first lemma shows a "Dynkin's formula" that relates the generator of the process with $U^\star$, the exit time from $(0,\infty)$. 

For $\lambda>0$ we denote
$u_\lambda(x)=\mathbb{E}_x[ e^{- \lambda U^\star}]$ where the subscript $x$ emphasizes the dependence upon the starting point of the process $Z^x$.

\begin{lem}[Dynkin's formula] \label{L:Dynkin}
1) The function $x \mapsto u_{\lambda}(x)$ is $\mathcal{C}^{\infty}(0,\infty)$ and
satisfies $\lim_{x \to 0} u_{\lambda}(x)=1$, and $\abs{u_\lambda(x)} 
\le e^{-\lambda x /\beta_1}$. 
Moreover the derivatives of $u_\lambda$ decay exponentially near 
$\infty$ and satisfy $x^k u_\lambda^{(k)}(x)=O(1)$ near $0$ (for any $k\ge 0$).

2) The function $u_{\lambda}$ is solution to the integro-differential equation:
\begin{equation}\label{E:dynkin}
 A u_\lambda(x)=\lambda u_\lambda(x), \quad \text{ for all $x >0$.}
\end{equation}
\end{lem}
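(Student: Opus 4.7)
The plan is to first exploit a scaling property of $Z^x$ to reduce part 1) to properties of a Laplace transform, then to establish the integro-differential equation in part 2) by applying Itô's formula to $e^{-\lambda t}u_\lambda(Z^x_t)$.

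\textbf{Scaling and smoothness (part 1).} Using the explicit compensator \eqref{E:def_compensator}, one checks that $\widetilde Z_t := Z^x_{xt}/x$ has the same law as $Z^1$: the drift $-\beta_1$ is preserved, and the change of variable $a = xb$ in $\nu$ gives $x\,\nu(xh, x\,db) = \nu(h, db)$, so that the intensity of the jump measure of $\widetilde Z$ coincides with $\nu(\widetilde Z_{u-}, db)\,du$. Hence $U^\star$ under $\mathbb P_x$ is distributed as $x\, U^\star$ under $\mathbb P_1$, and
\[
u_\lambda(x) = \phi(\lambda x), \qquad \phi(\mu) := \mathbb E_1\bigl[e^{-\mu U^\star}\bigr].
\]
By Theorem \ref{T:coalescence}, $U^\star < \infty$ almost surely under $\mathbb P_1$, so $\phi$ is the Laplace transform of a positive finite random variable, hence $\mathcal C^\infty$ on $(0,\infty)$ with $\phi(0^+) = 1$; this yields both the smoothness of $u_\lambda$ and $\lim_{x \to 0}u_\lambda(x) = 1$. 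Every jump of $Z^x$ being positive forces $Z^x_u \ge x - \beta_1 u$, hence $U^\star \ge x/\beta_1$, whence $u_\lambda(x) \le e^{-\lambda x/\beta_1}$.

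\textbf{Derivative estimates (part 1).} Differentiating under the expectation,
\[
u_\lambda^{(k)}(x) = (-\lambda)^k\,\mathbb E_1\!\left[(U^\star)^k e^{-\lambda x U^\star}\right].
\]
For large $x$, I would split $e^{-\lambda x U^\star} = e^{-\lambda x U^\star/2}\cdot e^{-\lambda x U^\star/2}$, bound the first factor via the elementary inequality $t^k e^{-ct} \le (k/(ec))^k$ with $c = \lambda x/2$, and control the second using $U^\star \ge 1/\beta_1$, obtaining exponential decay at rate $\lambda x/(2\beta_1)$. For the behaviour near zero, the same inequality with $c = \lambda x$ gives $x^k (U^\star)^k e^{-\lambda x U^\star} \le (k/(e\lambda))^k$, whence $x^k u_\lambda^{(k)}(x) = O(1)$ as $x \to 0$.

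\textbf{Dynkin's equation (part 2).} The strong Markov property together with $u_\lambda(0) = 1$ identifies
\[
N_t := e^{-\lambda(t\wedge U^\star)}\,u_\lambda\bigl(Z^x_{t\wedge U^\star}\bigr) = \mathbb E_x\!\left[e^{-\lambda U^\star}\,\big|\,\mathcal G_t\right]
\]
as a bounded $(\mathcal G_t)$-martingale. I would then apply Itô's formula with jumps to $e^{-\lambda t}u_\lambda(Z^x_t)$ up to the localizing stopping time $T_\varepsilon := \inf\{t : Z^x_t \le \varepsilon\}$, using the SDE \eqref{E:EDS_Z_usual}; the boundedness of $u_\lambda$ and the finiteness \eqref{E:prob_jump} of the total jump rate ensure that the compensated jump martingale is a true martingale up to $T_\varepsilon$, yielding
\[
e^{-\lambda(t\wedge T_\varepsilon)}u_\lambda\bigl(Z^x_{t\wedge T_\varepsilon}\bigr) = u_\lambda(x) + \int_0^{t\wedge T_\varepsilon}\!e^{-\lambda s}\bigl[A u_\lambda - \lambda u_\lambda\bigr](Z^x_s)\,ds + M^\varepsilon_t.
\]
Taking expectations, comparing with the martingale property of $N$, and letting $\varepsilon \to 0$ by monotone convergence ($T_\varepsilon \uparrow U^\star$) gives $\mathbb E_x\!\bigl[\int_0^{t\wedge U^\star}e^{-\lambda s}[A u_\lambda - \lambda u_\lambda](Z^x_s)\,ds\bigr] = 0$ for every $t > 0$. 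Dividing by $t$ and letting $t \to 0$, the right-continuity of $Z^x$ and the continuity of $h \mapsto Au_\lambda(h)$ near $x$ deliver the pointwise identity \eqref{E:dynkin}. The main technical hurdle will be this last continuity property, which must be established by showing uniform integrability in $h$ of the jump integrand $[u_\lambda(h+a) - u_\lambda(h)]\nu(h, da)$; this is granted by the derivative bounds of part 1) (for small $a$) and by the exponential decay of $u_\lambda$ combined with the tail $(1+a/h)^{-\gamma}$ of $\nu(h,da)$ (for large $a$).
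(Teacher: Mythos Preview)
Your proposal is correct and follows essentially the same route as the paper: the scaling identity $U^{\star,x}\overset{\mathrm{law}}{=}xU^{\star,1}$ yields all of part 1), and the martingale $e^{-\lambda(t\wedge U^\star)}u_\lambda(Z^x_{t\wedge U^\star})$ combined with It\^o's formula yields part 2). Two minor differences are worth noting: the paper establishes the scaling by an induction on the successive jump times rather than via the compensator (though it mentions your compensator argument in a remark), and in part 2) it invokes the uniqueness of the Doob--Meyer decomposition to obtain directly that $\int_0^t e^{-\lambda s}[Au_\lambda-\lambda u_\lambda](Z^x_s)\,ds=0$ almost surely for all $t<U^\star$, which slightly shortens your final step of taking expectations and dividing by~$t$.
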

\begin{proof}
1) First we show that $x\mapsto u_\lambda(x)$ is a smooth function. Denote $(U_n^x)_n$ the successive jumps of $(Z^x_u)_{u < U^{\star,x}}$, where again we stress the dependence upon $x$ as we write 
$U^{\star, x}=U^\star=\sup_n{U_n^x}$.

Since $Z^x$ evolves with the constant negative drift $-\beta_1 \dd t$ and jumps with the infinitesimal probability \eqref{E:prob_jump}, we can easily compute the law of $U_1^x$:
\begin{equation}\label{E:lawU11}
\mathbb{P}(U_1^x\ge t) = \left(1-\frac{\beta_1 t}{x}\right)^{\frac{1-\beta_1}{2\beta_1}}.
\end{equation}
 As
a result the law of $U_1^x$ is equal to the law of $x U_1^1$. Moreover, the law of the jump of
$Z^x_t$ is proportional to $Z^x_{t-}$ (see remark \ref{R:activ_finite}) and this implies that the law
of $Z^x_{U_1^x}$ and $x Z^1_{U_1^1}$ are equal. Consequently, we deduce that the processes
$(Z^x_{t})_{t \le U_1^x}$ and $(xZ^1_{t/x})_{t \le x U_1^1}$ have the same law. Then, by induction, it can be seen that the two processes $(Z^x_{t})$ and $(xZ^1_{t/x})$ have the same law up to their respective $n$-th jump time. Letting $n$ tend to infinity, we deduce,
\begin{equation}
(Z^x_t)_{t < U^{\star, x}} \overset{\text{law}}{=} (xZ^1_{t/x})_{t < xU^{\star,1}}
\text{ and } U^{\star,x}=xU^{\star,1}.
\end{equation}
Now, by definition $u_\lambda(x)=\mathbb{E}\left[e^{-\lambda U^{\star,x} } \right]=
\mathbb{E}\left[e^{-\lambda x U^{\star,1} } \right]$. In turn, $x \mapsto u_\lambda(x)$ is clearly
a
$\mathcal{C}^\infty(0,\infty)$ function.
Moreover, using $U^{\star,1}<\infty$ and 
Lebesgue's theorem we get $u_{\lambda}(x) \xrightarrow{ x \to 0} \mathbb{E}[e^0]=1$.  

Next, by \eqref{E:EDS_Z_ell} and the positivity of the jumps of $Z^x$, one must have
$U^{\star,x} \ge x/ \beta_1$ a.s. and 
thus $u_\lambda(x)=\mathbb{E}\left[e^{-\lambda U^{\star,x} } \right]\le e^{-\lambda x /\beta_1}$.
In the same way, from $u^{(k)}_\lambda(x)= \mathbb{E}\left[ (-\lambda U^{\star,1})^k e^{-\lambda x U^{\star,1} } \right]$ we easily deduce the exponential decay of $u^{(k)}_\lambda$ near $\infty$. Finally, the boundedness $x^k u^{(k)}_\lambda(x)$ is clear too from the latter representation of $u^{(k)}_\lambda(x)$.

%

2) We now prove the Dynkin's equation \eqref{E:dynkin}. Consider the martingale $(M_t)_{t \ge 0}$ defined as $M_t=\mathbb{E} [e^{-\lambda U^{\star,x}} \mid \mathcal{G}_{t \wedge U^{\star,x}}]$ for $t \ge 0$ (recall the notation \eqref{E:def_G}). 
We can write,
\begin{align} \nonumber
M_t \ind_{ \{ t < U^{\star,x} \} } &= 
\mathbb{E} [e^{-\lambda U^{\star,x}} \mid \mathcal{G}_{t \wedge U^{\star,x}}] \ind_{ \{ t < U^{\star,x} \} }
=
\mathbb{E} [e^{-\lambda U^{\star,x}} \ind_{ \{ t < U^{\star,x} \} } \mid \mathcal{G}_{t \wedge U^{\star,x}}] \ind_{ \{ t < U^{\star,x} \} }
\\
\label{E:proof_Dynkin1}
&=
\mathbb{E} [e^{-\lambda U^{\star,x}} \ind_{ \{ t < U^{\star,x} \} } \mid \mathcal{G}_{t}] \ind_{ \{ t < U^{\star,x} \} }.
\end{align}
For $t>0$,  denote
 $U^{\star}(Z^x_{t+\cdot})=\inf \{ s \ge 0 \mid Z^x_{t+s}  \le 0  \}$ and remark that on the set 
 $U^{\star,x} >t$ we have $U^{\star}(Z^x_{t+\cdot})=U^{\star,x} -t$ and thus,
 $e^{-\lambda U^{\star,x}} \ind_{ \{ t < U^{\star,x} \} } = e^{-\lambda[U^{\star}(Z^x_{t+\cdot})+t ]} \ind_{ \{ t < U^{\star,x} \} }$.
 As a result, using \eqref{E:proof_Dynkin1} with the Markov property at time $t$, we deduce
\begin{equation} \label{E:representation_Mt_1}
M_t \ind_{ \{ t < U^{\star,x} \} }
=u_{\lambda}(Z^x_t)e^{-\lambda t} \ind_{\{t < U^{\star,x} \}}.
\end{equation}
 
We now consider 
$M_t \ind_{ \{ t \ge U^{\star,x} \} }$. We can write $M_t \ind_{ \{ t \ge U^{\star,x} \} }
=\mathbb{E} [e^{-\lambda U^{\star,x}} 
\mid \mathcal{G}_{t \wedge U^{\star,x}}] \ind_{ \{ t \ge U^{\star,x} \} }=
\mathbb{E} [e^{-\lambda U^{\star,x}} 
\mid \mathcal{G}_{ U^{\star,x}}] \ind_{ \{ t \ge U^{\star,x} \} }=
e^{-\lambda U^{\star,x}} \ind_{ \{ t \ge U^{\star,x} \} }
$. Using $u_\lambda(Z_{U^{\star,x}}^x)=1$ and \eqref{E:representation_Mt_1}, we deduce,
\begin{equation} \label{E:reecriture_M}
M_t=u_\lambda(Z^x_{t \wedge U^{\star,x}}) e^{-\lambda(t \wedge U^{\star,x})}.
\end{equation}
 The relation \eqref{E:reecriture_M} shows that $(u_\lambda(Z^x_{t \wedge U^{\star,x}}) e^{-\lambda(t \wedge U^{\star,x})})_{t \ge 0}$ is a martingale. 
 Recalling \eqref{E:EDS_Z_ell}--\eqref{E:def_compensator} and \eqref{E:exp_generateur_abstrait}, we apply Ito's formula to the process $t \mapsto 
 u_\lambda(Z^x_{t }) e^{-\lambda t}$ and find for $t < U^{\star,x}$,
 \begin{equation} \label{E:decompo_Doob}
 u_\lambda(Z^x_t)e^{-\lambda t}=
 u_\lambda(x)+
 \int_0^t e^{-\lambda s} [A u_\lambda (Z_s^x) - \lambda u_\lambda (Z_s^x)] \dd s
 + N_t,
 \end{equation}
where $N_t=\sum_{0 \le s \le t} [u_\lambda(Z_{s-}^x+\Delta_s Z^x)- u_\lambda(Z_{s-}^x)]
- \int_0^t  \int_0^\infty [ u_\lambda(Z_{s-}^x+a)- u_\lambda(Z_{s-}^x)] \nu(Z_s^x,\dd a) \dd s $ is a local martingale. By uniqueness of the Doob-Meyer decomposition, the predictive finite variation part is zero in the representation 
\eqref{E:decompo_Doob} and we deduce,
\begin{equation*}
 \int_0^t e^{-\lambda s} [A u_\lambda (Z_s^x) - \lambda u_\lambda (Z_s^x)] \dd s =0, \forall t<U^{\star,x}, \text{almost surely}.
 \end{equation*}
 From the almost sure continuity of $s \mapsto Z^x_s$ at zero, we finally obtain $A u_\lambda(x)=\lambda u_\lambda(x)$ for all $x>0$.
\end{proof}
\begin{rem}\label{R:mul_invariance}
In the proof of Lemma \ref{L:Dynkin} we have shown that the laws of the processes $t \mapsto Z^x_{t}$
and $t \mapsto x Z^1_{t/x}$ coincide until they reach zero. This is not surprising, since one can show using \eqref{E:def_compensator}
that the compensator of the point processes $t\mapsto Z^x_{t}+\beta_1 t $ and 
$t \mapsto x (Z^1_{t/x} + \beta_1 t/x)$ are the same. For point processes with finite intensities, it is known that the compensator characterizes the law of the process (see Theorem 1.26 in Chapter III of \cite{JacShi_Book}).
\end{rem}
We now show that the integro-differential equation \eqref{E:dynkin} can be transformed into
an ordinary differential equation. Related techniques were used in \cite{CheLeeShe07} for computing the ruin time
of Levy processes. 
In \cite{CheLeeShe07}, a crucial fact is that the generator of a Levy process acts as a multiplier in the Fourier domain. Such simplifications in the Fourier domain do not occur for the generator of the process
$Z^x$, however the multiplicative invariance of the process (see Remark \ref{R:mul_invariance}) suggests the use of the Mellin's transform. 
\begin{lem}\label{L:ODE}
The function $x \mapsto u_\lambda(x)$ is solution to
\begin{equation*}
\beta_1 x u_\lambda^{''}(x)+ u_\lambda^{'}(x)(\lambda x + \beta_1 \xi^\star)
-\lambda (\gamma -2) u_\lambda(x)=0, \text{ for all  $x\in (0,\infty)$,}
\end{equation*}
where $\xi^\star=\frac{1}{2\beta_1}-\frac{1}{2\beta_2}$ and the constant
$\gamma$ was defined in Proposition \ref{P:EDS_Z}.
\end{lem}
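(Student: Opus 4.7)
My plan is to reduce the integro-differential equation $A u_\lambda(x)=\lambda u_\lambda(x)$ from Lemma \ref{L:Dynkin} to an ODE by exploiting the specific algebraic shape of the jump kernel $\nu(h,\dd a)=\frac{\kappa}{h^2}(1+a/h)^{-\gamma}\dd a$. The guiding principle is the multiplicative scaling invariance highlighted in Remark \ref{R:mul_invariance}: the kernel is homogeneous in $(h,a)$, so the natural change of variables $y=x+a$ should decouple the $x$-dependence from the dependence on $u_\lambda$ evaluated at points $y>x$.

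\medskip
\textbf{Step 1.} Starting from \eqref{E:dynkin} and \eqref{E:exp_generateur}, I set $y=x+a$ in the integral, so that $(1+a/x)^{-\gamma}\dd a = (y/x)^{-\gamma}\dd y$. This gives
\begin{equation*}
-\beta_1 u_\lambda'(x)+\kappa\, x^{\gamma-2}\int_x^{\infty} u_\lambda(y)\, y^{-\gamma}\dd y - \frac{\kappa}{(\gamma-1)\,x}\,u_\lambda(x) = \lambda\, u_\lambda(x),
\end{equation*}
after using $\int_x^{\infty} y^{-\gamma}\dd y = x^{1-\gamma}/(\gamma-1)$, which is legitimate because $\gamma-1=(1+\beta_2)/(2\beta_2)>0$. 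Denote $J(x)=\int_x^{\infty} u_\lambda(y)\, y^{-\gamma}\dd y$; this integral is finite thanks to the exponential decay of $u_\lambda$ furnished by Lemma \ref{L:Dynkin}(1).

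\medskip
\textbf{Step 2.} Differentiate the identity above in $x$. Using $J'(x)=-u_\lambda(x)\,x^{-\gamma}$ and Lemma \ref{L:Dynkin}(1) to justify differentiating under the integral sign near $0$ and near $\infty$, I obtain
\begin{equation*}
-\beta_1 u_\lambda''(x)+\kappa(\gamma-2)\,x^{\gamma-3} J(x) - \frac{\kappa}{x^{2}}\,u_\lambda(x) - \frac{\kappa}{(\gamma-1)\,x}\,u_\lambda'(x) + \frac{\kappa}{(\gamma-1)\,x^{2}}\,u_\lambda(x) = \lambda\, u_\lambda'(x).
\end{equation*}

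\medskip
\textbf{Step 3.} Eliminate the remaining integral $J(x)$ by invoking Step 1 once more: $\kappa\,x^{\gamma-3} J(x)=\frac{1}{x}\bigl[\beta_1 u_\lambda'(x)+\tfrac{\kappa}{(\gamma-1)x}u_\lambda(x)+\lambda u_\lambda(x)\bigr]$. Substituting, collecting terms, and multiplying through by $-x$ yields
\begin{equation*}
\beta_1 x\,u_\lambda''(x)+\bigl[\lambda x -(\gamma-2)\beta_1 + \tfrac{1-\beta_1}{2}\bigr] u_\lambda'(x) + C(x)\,u_\lambda(x) - (\gamma-2)\lambda\, u_\lambda(x)=0,
\end{equation*}
where, crucially, the coefficient $C(x)$ of $u_\lambda(x)/x$ vanishes: indeed, the three $u_\lambda/x^2$ contributions combine as $\frac{\kappa}{x^2}\bigl[\tfrac{\gamma-2}{\gamma-1}+\tfrac{1}{\gamma-1}-1\bigr]=0$. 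This algebraic miracle is the reason the Mellin-type reduction closes into an ODE.

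\medskip
\textbf{Step 4.} Finally, I check that $-(\gamma-2)\beta_1+\frac{1-\beta_1}{2}=\beta_1 \xi^\star$, which is a short calculation using $\kappa/(\gamma-1)=(1-\beta_1)/2$, $\gamma-2=(1-\beta_2)/(2\beta_2)$, and $\xi^\star=(\beta_2-\beta_1)/(2\beta_1\beta_2)$. This gives precisely the stated ODE.

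\medskip
The main obstacle is the bookkeeping in Step~3 and the verification that the coefficient in front of $u_\lambda/x^2$ collapses exactly to zero; without this cancellation, one would merely obtain another integro-differential equation. The other delicate point is justifying the manipulations under the integral (finiteness of $J$, differentiation under the integral sign), for which part (1) of Lemma \ref{L:Dynkin} — exponential decay of $u_\lambda$ and its derivatives, plus the bound $x^k u_\lambda^{(k)}(x)=O(1)$ near $0$ — is tailored exactly to what is needed.
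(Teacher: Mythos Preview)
Your argument is correct. The substitution $y=x+a$, differentiation, and elimination of $J(x)$ all go through as you describe, and the cancellation of the $u_\lambda/x^2$ coefficient as well as the identification $-(\gamma-2)\beta_1+\tfrac{1-\beta_1}{2}=\beta_1\xi^\star$ check out.

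Your route differs from the paper's. The paper works in the Mellin domain: it computes $\mathcal{M}[Af](\xi)=Q(\xi)\,\mathcal{M}[f](\xi-1)$ with $Q$ an explicit rational function, clears the denominator of $Q$, and then translates the resulting polynomial identity back via the rules $\mathcal{M}[xf'](\xi)=-\xi\,\mathcal{M}[f](\xi)$ and $\mathcal{M}[f'](\xi)=(1-\xi)\mathcal{M}[f](\xi-1)$. Your approach is the ``real-variable'' counterpart of the same idea: the power-law kernel makes the nonlocal term factor as $\kappa x^{\gamma-2}J(x)$, and differentiating once in $x$ together with the original equation suffices to eliminate $J$. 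Both methods rely on exactly the same structural feature --- the homogeneity of $\nu(h,\dd a)$ --- and both need part~(1) of Lemma~\ref{L:Dynkin} to justify the analytic steps (finiteness of $J$, differentiability). Your version is more elementary and self-contained; the Mellin argument, on the other hand, makes the algebraic source of the ODE (the rational multiplier $Q(\xi)$) transparent and would generalize more readily to other kernels in the same family.
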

\begin{proof} The main idea is that the generator $A$ acts as a kind of multiplier for the
Mellin's transform.
Let us recall that for $f: [0,\infty) \to \mathbb{R}$ one defines the Mellin's transform of $f$ as
\begin{equation*} 
\mathcal{M}\left[ f \right] (\xi)
= \int_0^\infty x^{\xi-1} f(x) \dd x,
\end{equation*}
for all $\xi \in \mathbb{C}$ such that the latter integral is well defined. It is clear that if $f$
is bounded and with exponential decay near $\infty$ then $\xi \mapsto \mathcal{M}\left[ f \right](\xi)$ is well
defined and holomorphic on the half plane $\{ \xi  \in \mathbb{C} \mid \mathop{Re} (\xi)>0 \}$. 

For such functions $f$, we recall the four following properties which are easily derived from the definition of the Mellin's transform:
\begin{align} \label{E:pte_Mellin_1}
\mathcal{M}\left[ x \mapsto f(x(1+y)) \right](\xi) &=(1+y)^{-\xi} \mathcal{M} \left[ f \right](\xi),
\text{ for $\mathop{Re}(\xi)>0$, }
\\ \label{E:pte_Mellin_2}
\mathcal{M}\left[ x \mapsto f(x)/x \right](\xi) &= \mathcal{M}\left[ f \right](\xi -1),
\text{ for $\mathop{Re}(\xi)>1$, }
\\ \label{E:pte_Mellin_3}
\mathcal{M}\left[ f' \right](\xi)&=(1-\xi)\mathcal{M}\left[ f \right](\xi -1) ,
\text{ if $f \in \mathcal{C}^1(0,\infty)$ and $\mathop{Re}(\xi)>1$, }
\\ \label{E:pte_Mellin_4}
\mathcal{M}\left[ x \mapsto xf'(x) \right](\xi)&=-\xi \mathcal{M}\left[ f \right](\xi),
\text{ if $f \in \mathcal{C}^1(0,\infty)$ and $\mathop{Re}(\xi)>0$.}
\end{align} 
Now, using the expression of the generator \eqref{E:exp_generateur} with a simple change of variable we have
\begin{equation*}
A f (x)= -\beta_1 f'(x)+
\int_0^\infty [f(x(1+y))-f(x)] \frac{\kappa}{x} (1+y)^{-\gamma} \dd y.
\end{equation*}
Using Fubini's theorem and the properties \eqref{E:pte_Mellin_1}--\eqref{E:pte_Mellin_3} 
we deduce for $\mathop{Re}(\xi)>1$,
\begin{align*}
\mathcal{M}\left[ A f \right] (\xi)
&=\beta_1 (\xi-1) \mathcal{M}\left[f\right](\xi-1)
+ 
\left(
\kappa \int_0^\infty [(1+y)^{-\xi+1}-1](1+y)^{-\gamma} \dd y 
\right)
\mathcal{M}\left[f\right](\xi-1) 
\\
&=\left[  \beta_1 (\xi-1) + \frac{\kappa}{\xi+\gamma-2}-\frac{\kappa}{\gamma-1}
\right]
\mathcal{M}\left[f\right](\xi-1)
\\
&=Q(\xi) \mathcal{M}\left[f\right](\xi-1)
\end{align*}
with $Q(\xi)$ being the rational function $\ds Q(\xi)=\frac{\beta_1 (\xi-1) (\xi-\xi^\star)}{(\xi+\gamma-2)}$ and $\xi^\star=\frac{1}{2\beta_1}-\frac{1}{2 \beta_2}$.

Now we turn back to the solution of the equation $A u_\lambda = \lambda u_\lambda$ and apply the Mellin's transform on both sides of this equality. We deduce,
\begin{equation*}
Q(\xi) \mathcal{M}\left[ u_\lambda \right](\xi-1)
=
\lambda \mathcal{M}\left[ u_\lambda \right](\xi), \quad \forall \xi \text{ with } \mathop{Re}(\xi)>1.
\end{equation*}
From the definition of $Q$, we obtain,
\begin{equation*}
\beta_1 (\xi-1) (\xi-\xi^\star) \mathcal{M}\left[ u_\lambda \right] (\xi-1)
=
\lambda(\xi+\gamma-2)
\mathcal{M}\left[ u_\lambda \right] (\xi),
\end{equation*}
and using \eqref{E:pte_Mellin_3}--\eqref{E:pte_Mellin_4} this equation can be transformed into
\begin{equation*}
- \beta_1 \xi \mathcal{M}\left[u^{'}_\lambda(x) \right](\xi)
+ \beta_1 \xi^\star \mathcal{M}\left[ u^{'}_{\lambda}\right](\xi)
=
\lambda (\gamma -2) \mathcal{M}\left[u_\lambda \right](\xi)
-\lambda \mathcal{M}\left[ x \mapsto x u^{'}_\lambda (x) \right]
(\xi), \quad \mathop{Re}(\xi)>1.
\end{equation*}
We apply again \eqref{E:pte_Mellin_4} with the choice $f=u_\lambda'$. Remark that, even if $f=u_\lambda'$ is not bounded near $0$, it is easy to see that the property \eqref{E:pte_Mellin_4} is still valid, using 
$u'_\lambda(x) \overset{x\to 0}{=}O(1/x)$ and $\mathop{Re}(\xi)>1$. 
We deduce the following relation for all $\xi$ with $\mathop{Re}(\xi)>1$,
\begin{equation} \label{E:Mellin_final}
\beta_1 \mathcal{M}\left[ x \mapsto x u^{''}_\lambda(x) \right](\xi)
+ \beta_1 \xi^\star \mathcal{M}\left[ u^{'}_{\lambda}\right](\xi)
=
\lambda (\gamma -2) \mathcal{M}\left[u_\lambda \right](\xi)
-\lambda \mathcal{M}\left[ x \mapsto x u^{'}_\lambda (x) \right]
(\xi).
\end{equation}
Since the equality \eqref{E:Mellin_final} holds true for any $\xi $ in the half plane
$\mathop{Re}(\xi)>1$, we may invert the Mellin's transform, and we deduce the lemma.
\end{proof}
\subsection{Proof of Theorem  \ref{T:law_U}} \label{S:proof_th_lawU}
By Lemma \ref{L:ODE}, the function $u_\lambda(x)=\mathbb{E}_x[e^{-\lambda U^\star}]$ is solution to the equation $\beta_1 x u_\lambda^{''}(x)+ u_\lambda^{'}(x)(\lambda x + \beta_1 \xi^\star)
-\lambda (\gamma -2) u_\lambda(x)=0$,  for all  $x\in (0,\infty)$. Then, if we define 
\begin{equation}\label{E:def_w_lambda}
w_\lambda(x)=\frac{\lambda}{\beta_1} u_\lambda(-\frac{x \beta_1}{\lambda}) \text{ for $x<0$},
\end{equation}
 it is simple to check that $w_\lambda$ is solution to the Kummer's equation
\begin{equation}\label{E:Kummer}
x w^{''}_\lambda(x)+w^{'}_\lambda(x) [ \xi^\star -x ]+(\gamma-2) w_\lambda(x)=0,
\quad \text{ for all $x \in (-\infty,0)$.}
\end{equation} 
Moreover, from Lemma \ref{L:Dynkin}, this solution $w_\lambda$ satisfies the boundary condition
$\lim_{x \to 0} w_\lambda(x)=\frac{\lambda}{\beta_1}$ and $\abs{w_\lambda(x)}\le \frac{\lambda}{\beta_1} e^{-\abs{x}}$ for $x<0$.

We know from \cite{AbrSte_Book} that the Kummer's equation \eqref{E:Kummer} admits two independent solutions
\begin{align} \label{E:y1_Kummer}
y_1(x)&=M(2-\gamma,\xi^\star,x)
\\ \label{E:y2_Kummer}
y_2(x)&=e^x U(\xi^\star+\gamma-2,\xi^\star,-x)
=\frac{1}{\Gamma(\xi^\star+\gamma-2)}
\int_1^\infty e^{x t}  t ^{1-\gamma} (t-1)^{\xi^\star+\gamma-3} \dd t
\end{align}
where $M$ and $U$ are the confluent hypergeometric functions (the functions $M$ and $U$ are defined by formulas 13.1.2 and 13.1.3 in chapter 13 of \cite{AbrSte_Book}, and see formula 13.2.6 of \cite{AbrSte_Book} for the integral representation of $U$).
The asymptotic behaviour of the fundamental solutions can be found, using equation 13.1.5 in \cite{AbrSte_Book}
\begin{equation*}
y_1(x)\sim_{x \to -\infty} \frac{\Gamma(\xi^\star)}{\Gamma(\xi^\star+\gamma-2)}(-x)^{\gamma-2}
\end{equation*}
and using equation 13.5.2 in \cite{AbrSte_Book},
\begin{equation*}
y_2(x)\sim_{x \to -\infty} e^x (-x)^{2-\gamma-\xi^\star}.
\end{equation*}
From the exponential decay of $w_\lambda$, we deduce that $w_\lambda$ is proportional to $y_2$. Hence, using 
\eqref{E:y2_Kummer} we get
\begin{equation*}
w_\lambda(x)=c y_2(x)=\frac{c}{\Gamma(\xi^\star+\gamma-2)}
\int_1^\infty e^{x t} (t-1)^{\xi^\star+\gamma-3} t ^{1-\gamma} \dd t
\end{equation*}
for some $c\in \mathbb{R}$.

The condition $\beta_1 > \frac{\beta_2}{1+2\beta_2}$, equivalent to $\xi^\star<1$, is sufficient 
for the finiteness of  $U(\xi^\star+\gamma-2,\xi^\star,0)$ and 
$U(\xi^\star+\gamma-2,\xi^\star,0)= \frac{\Gamma(1-\xi^\star)}{\Gamma(\gamma-1)}$ (see
formulas 13.5.10--13.5.12 in \cite{AbrSte_Book}). We deduce that $c=\frac{\Gamma(\gamma-1)}{\Gamma(1-\xi^\star)} \frac{\lambda}{\beta_1}$ and
\begin{equation} \label{E:calcul_w}
w_\lambda(x)=\frac{\Gamma(\gamma-1)}{\Gamma(1-\xi^\star) \Gamma(\xi^\star+\gamma-2)}
\frac{\lambda}{\beta_1}
\int_1^\infty e^{x t} (t-1)^{\xi^\star+\gamma-3} t ^{1-\gamma} \dd t.
\end{equation}
From \eqref{E:def_w_lambda} and \eqref{E:calcul_w} we deduce,
\begin{align*}
\mathbb{E}_x[e^{-\lambda U^\star}]=u_\lambda(x)&=
\frac{\Gamma(\gamma-1)}{\Gamma(1-\xi^\star) \Gamma(\xi^\star+\gamma-2)}
\frac{\lambda}{\beta_1}
\int_1^\infty e^{-\frac{\lambda x t}{\beta_1}} (t-1)^{\xi^\star+\gamma-3} t^{1-\gamma} \dd t
\\
&=
\frac{\Gamma(\gamma-1)}{\Gamma(1-\xi^\star) \Gamma(\xi^\star+\gamma-2)}
\int_{x/\beta_1}^\infty 
e^{-\lambda u} \left( \frac{\beta_1 u}{x}-1 \right)^{\xi^\star+\gamma-3} 
\left(\frac{\beta_1 u}{x}\right)^{1-\gamma} \dd u.
\end{align*}
where in the last line we have made a simple change of variable.

Identification of the Laplace's transform shows that the law of $U^\star$ admits
the density $\frac{p_{U^\star}(x,\dd u)}{\dd u}= \frac{\Gamma(\gamma-1)}{\Gamma(1-\xi^\star) \Gamma(\xi^\star+\gamma-2)}
\left( \frac{\beta_1 u}{x}-1 \right)^{\xi^\star+\gamma-3} 
\left(\frac{\beta_1 u}{x}\right)^{1-\gamma} \ind_{\{ u \ge x/\beta_1 \}}$.
The expression \eqref{E:law_U_star} is obtained with simple algebra and recalling $\gamma=\frac{1+3\beta_2}{2\beta_2}$.
\qed

\section{Case of negative skewness coefficients}
\label{S:negative_beta}
In this section, we give sketches of the proofs of the results of Section \ref{S:Main} corresponding to the situations where one of the skewness parameters may be negative.

\subsection{Proof of Proposition \ref{P:coalescence_negative}}

If $X^{x,\beta}$ and $X^{x,\beta'}$ are two solutions of the skew Brownian motion equation \eqref{E:def_skew} with $-1<\beta <\beta'<1$ then $\mathbb{P}(X^{x,\beta}_t \le X^{x,\beta'}_t, \forall t \ge 0)=1$
(see Theorem 3.1 in \cite{BurChe01}).  
From this comparison property, it is simple to deduce Proposition \ref{P:coalescence_negative}
from Theorem \ref{T:coalescence}.

\subsection{Sketches of the proof of Theorem \ref{T:law_U_diff_sign}}

The proof of Theorem \ref{T:law_U_diff_sign} follows the same route as the proof of Theorem 
\ref{T:law_U}: we first determine the dynamic of $Z^x$ and then characterize the solution
of the associated Dynkin equation with help of the Mellin's transform. Finally, we identify the Laplace transform of the law of $U^\star$ with an explicit solution of the Kummer's equation. Let us give some more details.

In the case $-1<\beta_2<0<\beta_1<1$, 
by a proof similar to the one of Proposition \ref{P:L0_somme_ell}, we can show that 
\begin{equation*}
L^{0}_{\tau_t}(X^{x})= \sum_{0<u\le t} \ell(X_{\tau_{u-}}^x,\textbf{e}_u),
\end{equation*}
where the law of the functional $\ell$ under the excursion measure is 
\begin{equation*}
\textbf{n}_{\beta_1} ( \ell(h,\textbf{e}) \ge a) = 
\frac{1-\beta_1}{2h}
\left( 1+ \frac{\beta_2 a}{h} \right)^{-\frac{1+\beta_2}{2\beta_2}} \ind_{[0,\frac{h}{\abs{\beta_2}}]}(a), \quad 
\forall a >0.
\end{equation*}

We deduce that the dynamic of the process 
$Z^x_t=X^{x}_{\tau_t}$ is as follows
\begin{align*}
Z_t^x &= x-\beta_1 t + \sum_{0<u\le t} \beta_2 \ell(Z_{u-}^x,\textbf{e}_u),
\\&=
x-\beta_1 t  - \int_{[0,t]\times(0,\infty)}
a \mu(\dd u, \dd a),
\end{align*}
where the compensator of the random measure $\mu(\dd u, \dd a)$ is  $\dd u \times \nu(Z^x_{u-},\dd a)$ with
\begin{equation*} 
\nu(h,\dd a)= \frac{\abs{\kappa}}{h^2} \left(1-\frac{a}{h}\right)^{-\gamma} \ind_{[0,h]}(a) \dd a,
\quad \kappa=\frac{(1-\beta_1)(1+\beta_2)}{4 \beta_2} \text{ and } \gamma=\frac{1+3\beta_2}{2\beta_2}.
\end{equation*}

Remark that both the drift and jumps of $Z^x$ are negative, yielding to an almost sure finite hitting time of the level $0$. 
Especially, it is clear that the support of $U^\star$ is included in $[0,\frac{x}{\beta_1}]$. The generator of the process $Z^x$ now writes,
\begin{equation*}
A f (h)=-\beta_1 f'(h)+
\int_0^h [ f(h-a)-f(h)] \frac{\abs{\kappa}}{h^2} \left( 1 - \frac{a}{h} \right)^{-\gamma} \dd a, 
\quad\text{ for $h>0$.}
\end{equation*}
Consider $u_\lambda(x)=\mathbb{E}[e^{-\lambda U^{\star,x}}]$ where we use the notations of the proof of
Lemma \ref{L:Dynkin}. Exactly as in Lemma \ref{L:Dynkin}, we can prove that $u_\lambda$ satisfies the Dynkin's 
formula $A u_\lambda=\lambda u_\lambda$.
Moreover, as in Lemma \ref{L:Dynkin}, we can check that $U^{\star,x}$ and $x U^{\star,1}$ have the same law and hence $u_\lambda(x)=\mathbb{E}[e^{-\lambda x U^{\star,1}}]$. As a result, the function 
$x\mapsto u_\lambda(x)$ is the Laplace transform of the law of a random variable with the compact support 
 $[0,\frac{\lambda}{\beta_1}]$.  
 
 Then, the integro-differential equation $A u_\lambda=\lambda u_\lambda$ can be transformed to
 an ordinary differential equation by applying the Mellin's transform as in the proof of Lemma \ref{L:ODE}. 
 One finds exactly the same equation as in the case of positive skewness coefficients,
 \begin{equation} \label{E:EDO_section5}
\beta_1 x u_\lambda^{''}(x)+ u_\lambda^{'}(x)(\lambda x + \beta_1 \xi^\star)
-\lambda (\gamma -2) u_\lambda(x)=0, \text{ for all  $x\in (0,\infty)$,}
\end{equation}
with $\xi^\star=\frac{1}{2\beta_1}-\frac{1}{2\beta_2}$ and $\gamma=\frac{1+3\beta_2}{2 \beta_2}$.

Let us stress that some additional technical difficulties arise for the application of the Mellin's transform when $\beta_2<0$. 
Indeed, contrary to the case of Section \ref{S:Hitting}, we cannot show the exponential decay of $u_\lambda$
(due to the different shape of the support of the law of $U^\star$).
In order to define and manipulate the Mellin's transform of $u_\lambda$ on some sufficiently large strip,
some preliminary bounds on the decay of $u_\lambda$ have to be established.
The Lemma \ref{L:borne_ulambda_negative} of the Appendix ensures that all the necessary computations are allowed.


 As in the Section \ref{S:proof_th_lawU}, if one sets  $w_\lambda(x)=\frac{\lambda}{\beta_1} u_\lambda(-\frac{x \beta_1}{\lambda})$ for $x<0$, then $w_\lambda$ is solution to the Kummer's equation \eqref{E:Kummer}. A fundamental system of solution to the Kummer's equation is given by $y_1$ and $y_2$ (see \eqref{E:y1_Kummer}--\eqref{E:y2_Kummer}). For $\beta_2<0$, the solution $y_1$ admits an integral representation (see Formula 13.2.1 in \cite{AbrSte_Book}),
 \begin{equation} \label{E:y2_integral}
 y_1(x)=\frac{\Gamma(\xi^\star)}{\Gamma(\xi^\star + \gamma -2)\Gamma(2-\gamma)}
 \int_0^1 e^{xt} t^{1-\gamma} (1-t)^{\xi^\star+\gamma-3} \dd t.
 \end{equation}
Comparing \eqref{E:y2_Kummer} and \eqref{E:y2_integral}, with the fact that $u_\lambda$ is the Laplace transform of some function with compact support, we deduce that $w_\lambda$ is proportional to $y_1$. From the condition $w_\lambda(0)=\frac{\lambda}{\beta_1}u_\lambda(0)=\frac{\lambda}{\beta_1}$, we get,
\begin{equation}
 w_\lambda(x)= \frac{\lambda}{\beta_1}
\frac{\Gamma(\xi^\star)}{\Gamma(\xi^\star + \gamma -2)\Gamma(2-\gamma)}
 \int_0^1 e^{xt} t^{1-\gamma} (1-t)^{\xi^\star+\gamma-3} \dd t.
\end{equation}
Using $\mathbb{E}_{x}[e^{-\lambda U^\star}]= u_\lambda(x)=
\frac{\beta_1}{\lambda} w_\lambda(- \frac{\lambda x}{\beta_1} )
$
with a few computations, one can deduce \eqref{E:law_U_star_beta_neg}.

\subsection{Proof of Corollary \ref{C:x_neg} $(x>0, \beta_1<0, \beta_2<0)$}
Set $\widetilde{X}^{-x}=-X^x$, $\widetilde{X}^0=-X^0$ and $\widetilde{B}=-B$, then
\begin{align*}
\widetilde{X}^{-x}_t&=-x + \widetilde{B}_t + \abs{\beta_2} L^0_t(\widetilde{X}^{-x}),\\
\widetilde{X}^0_t &= \widetilde{B}_t + \abs{\beta_1} L^0_t(\widetilde{X}^0),
\end{align*}
so that we are now dealing with positive skewness coefficients, but a negative starting value $-x$.
Denote $T^0= \inf \{t >0 \mid \widetilde{X}^{-x}_t =0\}$. We define $\widehat{X}^0_t=\widetilde{X}^{-x}_{T_0+t}$, $\widehat{X}_t=\widetilde{X}^0_{T^0+t}$ and $\widehat{B}_t=\widetilde{B}_{T^0+t}-\widetilde{B}_{T^0}$. These processes
are solutions to
\begin{align*}
\widehat{X}_t&=\widehat{X}_0 + \widehat{B}_t + \abs{\beta_1} L^0_t(\widehat{X}),\\
\widehat{X}^0_t &= \widehat{B}_t + \abs{\beta_2} L^0_t(\widehat{X}^0),
\end{align*}
where $\widehat{X}_0$ is independent of $(\widehat{B}_t)_{t\ge 0}$.
Note that for these new processes the role of the skewness parameter has been exchanged, and the starting point
of $\widehat{X}$ is a positive random variable. Let us introduce $\widehat{T}^\star=\inf \{ t \ge 0:
\widehat{X}_t= \widehat{X}^0_t \}=T^\star-T^0$.

Using the Markov property at the random time $T^0$, and applying Theorem \ref{T:law_U}, we get that
$\displaystyle \mathbf{B}_1=\left(  \frac{ \abs{\beta_2} L_{\widehat{T}^\star}^0(\widehat{X}^0)}{\widehat{X}_0}    \right)^{-1}$ is independent of $\widehat{X}_0$ and distributed as a Beta $\mathcal{B} \left(1-\frac{1}{2\abs{\beta_2}}+\frac{1}{2\abs{\beta_1}}, \frac{1-\abs{\beta_2}}{2\abs{\beta_2}} \right)$ variable.

The random variable $\mathbf{B}_1$ can be related to the local time of the initial process,
\begin{align*}
L^0_{T^\star}(X^0)&=L_{T^\star}^0(\widetilde{X}^0)
\\
&=\frac{\abs{\beta_2}}{\abs{\beta_1}} 
L_{T^\star}^0(\widetilde{X}^{-x})-\frac{x}{\abs{\beta_1}}
\\
&
=\frac{\abs{\beta_2}}{\abs{\beta_1}} L_{\widehat{T}^\star}^0(\widetilde{X}^{-x}_{T^0+\cdot})
-\frac{x}{\abs{\beta_1}}=
\frac{\abs{\beta_2}}{\abs{\beta_1}} L_{\widehat{T}^\star}^0(\widehat{X}^0)
-\frac{x}{\abs{\beta_1}}
\\
&=\frac{\widehat{X}_0 \mathbf{B}_1^{-1}}{\abs{\beta_1}}
- \frac{x}{\abs{\beta_1}}
\\
&=\frac{\widetilde{X}^0_{T^0} \mathbf{B}_1^{-1}}{\abs{\beta_1}}
- \frac{x}{\abs{\beta_1}}
\\
&=\frac{[x+ \abs{\beta_1} L^{0}_{T^0}(\widetilde{X}^0) ] \mathbf{B}_1^{-1}}{\abs{\beta_1}}
- \frac{x}{\abs{\beta_1}}.
\end{align*}
But the law of $L^{0}_{T^0}(\widetilde{X}^0)$ may be derived by computations similar to
those of the step 3 in the proof of Proposition \ref{P:L0_somme_ell} (see \eqref{E:W_step3}). 
One finds that
$\mathbb{P}(L^0_{T^0}(\widetilde{X}^0) > a)=
\left( 1+ \frac{\abs{\beta_1} a}{x} \right)^{-\frac{1+\abs{\beta_2}}{2\abs{\beta_2}}}$.
This means that $\mathbf{B}_2=[1+\frac{\abs{\beta_1} L^0_{T^0}(\widetilde{X}^0) }{x}]^{-1}$ is distributed
as a $\mathcal{B}(\frac{1+\abs{\beta_1}}{2\abs{\beta_1}},1)$ variable.
Since $L^0_{T^\star}(X^0)= \frac{x}{\abs{\beta_1}}[ \mathbf{B}_2^{-1}\mathbf{B}_1^{-1}-1]$ the corollary is proved.

\section{Appendix}
We were unable to find a reference for the Kronecker Lemma in the context of continuous time local martingale defined on some random interval $[0,U]$ (however see \cite{Elliot01} for close results). Hence we give below a short proof of the result.

\begin{lem} 
\label{L:Kronecker}
Let $(\widetilde{\mathcal{J}}_t)_{0 \le t < U}$ be a locally square integrable ($\mathcal{G}_t$)-martingale with 
localizing sequence
$\tau_n=\inf \{ u \mid \left\langle \widetilde{\mathcal{J}},\widetilde{\mathcal{J}}  
\right\rangle_u \ge n \}$ and $U=\sup_{n} \tau_n$ (especially
 $U=\infty$ if $\left\langle \widetilde{\mathcal{J}},\widetilde{\mathcal{J}}  \right\rangle_\infty<\infty$). 
Assume additionally that $(\left\langle \widetilde{\mathcal{J}},\widetilde{\mathcal{J}}  \right\rangle_t)_{t \in [0,U)}$ is a continuous process.
Then, 

\begin{itemize}
\item On the set $\left\langle \widetilde{\mathcal{J}},\widetilde{\mathcal{J}}  \right\rangle_U<\infty$, we have the convergence
of $\widetilde{\mathcal{J}}_t$ as $t \to U$.
\item On the set $\left\langle \widetilde{\mathcal{J}},\widetilde{\mathcal{J}}  \right\rangle_U=\infty$ we have 
$\frac{\widetilde{\mathcal{J}}_t}{\left\langle \widetilde{\mathcal{J}},\widetilde{\mathcal{J}}  \right\rangle_t} \xrightarrow{t \to U} 0$.
\end{itemize}
\end{lem}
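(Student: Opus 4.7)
The plan is to reduce everything to a bounded-bracket martingale via a standard Kronecker-type integration-by-parts trick. I would define
\[
N_t \;=\; \int_0^t \frac{d\widetilde{\mathcal{J}}_s}{1 + \langle \widetilde{\mathcal{J}}, \widetilde{\mathcal{J}}\rangle_s}, \qquad t < U.
\]
Since the integrand is bounded predictable and $\widetilde{\mathcal{J}}$ is locally square integrable, $N$ is a local martingale whose bracket, using continuity of $\langle \widetilde{\mathcal{J}}, \widetilde{\mathcal{J}}\rangle$, computes explicitly to
\[
\langle N, N\rangle_t \;=\; \int_0^t \frac{d\langle \widetilde{\mathcal{J}}, \widetilde{\mathcal{J}}\rangle_s}{(1+\langle \widetilde{\mathcal{J}}, \widetilde{\mathcal{J}}\rangle_s)^2} \;=\; 1 - \frac{1}{1+\langle \widetilde{\mathcal{J}}, \widetilde{\mathcal{J}}\rangle_t} \;\le\; 1.
\]
Hence $N$ is an $L^2$-bounded martingale on $[0,U)$ and the martingale convergence theorem provides an a.s.\ finite $N_U := \lim_{t\to U} N_t$ on the whole sample space, irrespective of whether $\langle \widetilde{\mathcal{J}}, \widetilde{\mathcal{J}}\rangle_U$ is finite or infinite.

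Next I would apply integration by parts to the product $(1+\langle \widetilde{\mathcal{J}}, \widetilde{\mathcal{J}}\rangle_t)\, N_t$. Because $\langle \widetilde{\mathcal{J}}, \widetilde{\mathcal{J}}\rangle$ is continuous of finite variation, the quadratic covariation term vanishes, and by associativity of the stochastic integral $\int_0^t (1+\langle \widetilde{\mathcal{J}}, \widetilde{\mathcal{J}}\rangle_s)\,dN_s = \widetilde{\mathcal{J}}_t - \widetilde{\mathcal{J}}_0$. The resulting pathwise identity is
\[
\widetilde{\mathcal{J}}_t \;=\; \widetilde{\mathcal{J}}_0 + (1+\langle \widetilde{\mathcal{J}}, \widetilde{\mathcal{J}}\rangle_t)\, N_t - \int_0^t N_{s-}\, d\langle \widetilde{\mathcal{J}}, \widetilde{\mathcal{J}}\rangle_s, \qquad t < U. \qquad (\star)
\]

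From $(\star)$ both conclusions of the lemma follow. On $\{\langle \widetilde{\mathcal{J}}, \widetilde{\mathcal{J}}\rangle_U < \infty\}$ the product $(1+\langle \widetilde{\mathcal{J}}, \widetilde{\mathcal{J}}\rangle_t) N_t$ has the finite limit $(1+\langle \widetilde{\mathcal{J}}, \widetilde{\mathcal{J}}\rangle_U)N_U$, and the integral converges at $U$ because $N_{s-}$ stays bounded on $[0,U)$ (it has a finite limit) while $\langle \widetilde{\mathcal{J}}, \widetilde{\mathcal{J}}\rangle$ has finite total variation; hence $\widetilde{\mathcal{J}}_t$ converges as $t\to U$. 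On $\{\langle \widetilde{\mathcal{J}}, \widetilde{\mathcal{J}}\rangle_U = \infty\}$, dividing $(\star)$ by $\langle \widetilde{\mathcal{J}}, \widetilde{\mathcal{J}}\rangle_t$ produces three terms tending respectively to $0$, to $N_U$, and to $N_U$, so the difference vanishes, giving $\widetilde{\mathcal{J}}_t / \langle \widetilde{\mathcal{J}}, \widetilde{\mathcal{J}}\rangle_t \to 0$.

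The only non-bookkeeping step I foresee is the Cesaro-type convergence of the last term after division, namely $A_t^{-1} \int_0^t f_s\, dA_s \to \ell$ whenever $A_t \uparrow \infty$ and $f_s \to \ell$ (applied pathwise with $A = \langle \widetilde{\mathcal{J}}, \widetilde{\mathcal{J}}\rangle$ and $f = N_{\cdot-}$). This is a classical real-variable fact, proved by splitting the integral at some $t_0$ chosen so that $|f_s - \ell| < \varepsilon$ for $s \in [t_0,U)$ and letting $t\to U$; no stochastic input is needed beyond what has already been established.
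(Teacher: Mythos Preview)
Your proposal is correct and follows essentially the same route as the paper: both define the auxiliary martingale $N_t=\int_0^t (1+\langle\widetilde{\mathcal{J}},\widetilde{\mathcal{J}}\rangle_s)^{-1}\,d\widetilde{\mathcal{J}}_s$, compute its bracket to see it is $L^2$-bounded and hence convergent, and then recover $\widetilde{\mathcal{J}}$ via integration by parts to read off both conclusions. The paper's presentation differs only cosmetically (it localizes $N$ at $\tau_n$ explicitly before passing to the limit, and it writes the integration-by-parts identity with a split at an intermediate time $u_0$ rather than from $0$), but the substance is identical to your $(\star)$ plus the Ces\`aro argument you sketch at the end.
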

\begin{proof}
First, define the event 
$\Omega_{n}= \{\omega \mid \left\langle \widetilde{\mathcal{J}},\widetilde{\mathcal{J}} 
 \right\rangle_{\infty} < n \}$. On this event, $U=\tau_n=\infty$ and 
$(\widetilde{\mathcal{J}}_t)_{t \ge 0}=\widetilde{\mathcal{J}}_{t\wedge \tau_n})_{t \ge 0}$ is a bounded $\mathbf{L}^2$ 
martingale and thus converges as $t \to \infty$.
Since the convergence holds on the set $\Omega_{n}$ for all $n$, it holds on the set $ \{ \omega \mid \left\langle \widetilde{\mathcal{J}},\widetilde{\mathcal{J}}  \right\rangle_{\infty} <\infty\}$.

We now focus on the set $\{\left\langle \widetilde{\mathcal{J}},\widetilde{\mathcal{J}}  \right\rangle_{U}=\infty \}$. Define $\tilde{N}_{u}^n=
\int_0^{u\wedge \tau_n }  \frac{\dd \widetilde{\mathcal{J}}_u}{1+ \left\langle \widetilde{\mathcal{J}},\widetilde{\mathcal{J}}  \right\rangle_u }$ which is a $\mathbf{L}^2$-bounded martingale for each $n$ with,
$$
\left\langle \widetilde{N}^n,\widetilde{N}^n  \right\rangle_{u}
=\int_0^{u \wedge \tau_n} \frac{\dd \left\langle \widetilde{\mathcal{J}},\widetilde{\mathcal{J}} \right\rangle_u}{(1+ \left\langle \widetilde{\mathcal{J}},\widetilde{\mathcal{J}}  \right\rangle_u)^2 }
= 1 - \frac{1}{1+ \left\langle \widetilde{\mathcal{J}},\widetilde{\mathcal{J}}  \right\rangle_{u \wedge \tau_n} } \le 1.
$$
From this, we easily see that the sequence 
$( \int_0^{\tau_n} \frac{\dd \widetilde{\mathcal{J}}_u}{1+ \left\langle \widetilde{\mathcal{J}},\widetilde{\mathcal{J}}  \right\rangle_u } )_{n \ge 1}$ is a 
$\mathcal{G}_{\tau_n}$-martingale sequence which converges to some 
$\mathbf{L}^2$ variable 
$\int_0^U \frac{\dd \widetilde{\mathcal{J}_u}}{1+ \left\langle \widetilde{\mathcal{J}},\widetilde{\mathcal{J}}  \right\rangle_u }$. 
As a consequence $\widetilde{N}_u=\int_0^{u \wedge U}\frac{\dd \widetilde{\mathcal{J}}_u}{1+ \left\langle \widetilde{\mathcal{J}},\widetilde{\mathcal{J}}  \right\rangle_u }$ is a true $\mathbf{L}^2$-martingale. 

We now write for $u_0<u<U$,
\begin{align*}
\widetilde{\mathcal{J}}_u=\int_0^u (1+ \left\langle \widetilde{\mathcal{J}},\widetilde{\mathcal{J}}  \right\rangle_u) \dd \widetilde{N}_u
&=
\int_0^{u_0} (1+ \left\langle \widetilde{\mathcal{J}},\widetilde{\mathcal{J}}  \right\rangle_u) \dd \widetilde{N}_u
+
\int_{u_0}^{u} (1+ \left\langle \widetilde{\mathcal{J}},\widetilde{\mathcal{J}}  \right\rangle_u) \dd \widetilde{N}_u
\\
&=
\int_0^{u_0} (1+ \left\langle \widetilde{\mathcal{J}},\widetilde{\mathcal{J}}  \right\rangle_u) \dd \widetilde{N}_u
-\int_{u_0}^u (\widetilde{N}_v - \widetilde{N}_{u_0}) 
\dd \left\langle \widetilde{\mathcal{J}},\widetilde{\mathcal{J}}  \right\rangle_u
\\
& \quad \quad \quad
+
(1+\left\langle \widetilde{\mathcal{J}},\widetilde{\mathcal{J}}  \right\rangle_u)(\widetilde{N}_u-\widetilde{N}_{u_0})
\end{align*}
where we have used Ito's formula.

Then,
a convenient choice of $u_0$ determined by the almost sure convergence of $\widetilde{N}_u$ as $u\to U$, with 
$\left\langle \widetilde{\mathcal{J}},\widetilde{\mathcal{J}}  \right\rangle_U=\infty$, easily implies
that $\ds \frac{\widetilde{\mathcal{J}}_u}{\left\langle \widetilde{\mathcal{J}},\widetilde{\mathcal{J}}  \right\rangle_u} \xrightarrow{u \to U}0$.
\end{proof}

In the proof of Theorem  \ref{T:coalescence} we used the following lemma.
\begin{lem}
\label{L:eq_explosive}
No locally bounded measurable function $h : [0,\infty )\rightarrow {\mathbb R}^{\ast +}$ exist such that there exists $c>0$ and $t_0 \geq 0$ satisfying:
$\forall t\geq t_0$,
$$
\ln \left( h(t)\right) \leq -c \int_{t_0}^t \frac{\dd u}{h(u)}.
$$
\end{lem}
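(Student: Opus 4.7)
The plan is to reduce the stated integral inequality to an ordinary differential inequality for a primitive, and then to explicitly integrate it to obtain a finite upper bound on $t$, contradicting the fact that the hypothesis should hold for every $t\geq t_0$.

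Concretely, I would first set $H(t)=\int_{t_0}^{t}\frac{du}{h(u)}$. Since $h>0$ and the hypothesis forces $\int_{t_0}^{t}du/h(u)<\infty$ for every $t\geq t_0$ (otherwise the right-hand side $-c\int_{t_0}^{t}du/h(u)$ would be $-\infty$ while $\ln h(t)\in\mathbb{R}$), the function $1/h$ is locally integrable on $[t_0,\infty)$, so $H$ is absolutely continuous with $H'(t)=1/h(t)$ for almost every $t\geq t_0$. The assumed inequality $\ln h(t)\leq -cH(t)$ then rewrites as $h(t)\leq e^{-cH(t)}$, or equivalently
\[
H'(t)\;=\;\frac{1}{h(t)}\;\geq\;e^{cH(t)}\qquad\text{for a.e.\ }t\geq t_0.
\]

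Next I would rewrite this as $\frac{d}{dt}\!\left[-\frac{1}{c}e^{-cH(t)}\right]=H'(t)\,e^{-cH(t)}\geq 1$ a.e., and integrate between $t_0$ and $t$ (using $H(t_0)=0$, hence $e^{-cH(t_0)}=1$) to obtain
\[
\frac{1}{c}\bigl(1-e^{-cH(t)}\bigr)\;\geq\;t-t_0\qquad\text{for all }t\geq t_0.
\]
Since $e^{-cH(t)}\geq 0$, the left-hand side is bounded by $1/c$, so $t-t_0\leq 1/c$ for every $t\geq t_0$, which is impossible. This contradiction proves the lemma.

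I do not anticipate a real obstacle: the only mildly delicate point is justifying $H'=1/h$ almost everywhere starting merely from measurability and local boundedness of $h$, but this is standard once one has argued that $1/h$ is locally integrable (which itself follows from the hypothesis, as noted above). The integration step is then a direct application of the chain rule for absolutely continuous functions.
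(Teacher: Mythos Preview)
Your proof is correct and is essentially the same argument as the paper's: your primitive $H(t)=\int_{t_0}^{t}\frac{du}{h(u)}$ coincides with the paper's $y(t)=\int_{t_0}^{t}e^{g(u)}\,du$ (since $g=-\ln h$), and both proofs reduce to the differential inequality $H'\ge e^{cH}$, integrate $d(e^{-cH})\le -c\,dt$, and obtain the same contradiction $t-t_0\le 1/c$. Your presentation is slightly more explicit about local integrability of $1/h$ and the absolute-continuity justification, but the route is identical.
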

\begin{proof}
Set $g(t):= - \ln \left(h(t)\right)$. The inequality becomes $\displaystyle g(t) \geq c \int_{t_0}^t e^{g(u)}\dd u$. 
Denote $y(t)=\int_{t_0} ^{t} e^{g(u)}\dd u$ which is an increasing function.
One has the inequality between Stieljes measures on $[t_0,\infty)$, 
$\dd (e^{-c y(t)}) \le  - c \dd t$, that integrates to
$e^{-c y(t)}-e^{-c y(t_0)} \le - c (t-t_0)$. This yields to a contradiction as $t \to \infty$.
\end{proof}

\begin{lem}\label{L:borne_ulambda_negative}
Assume $-1<\beta_2<0<\beta_1<1$ and set $\rho$ any real number with $0<\rho< (1-\gamma)\wedge 1$ 
(recall that $\gamma=\frac{1+3\beta_2}{2\beta_2}$ is defined in Proposition \ref{P:EDS_Z}). 

Then, the following functions are bounded on $[0,\infty)$: 
$x \mapsto u_\lambda(x)$,  $x \mapsto x \abs{u'_\lambda(x)}$,  $x \mapsto x^2 \abs{u''_\lambda(x)}$,
$x \mapsto x^{1+\rho} u_\lambda(x)$,
$x \mapsto x^{2 + \rho} \abs{u'_\lambda(x)}$,
and $x^{3-\varepsilon} \abs{u''_\lambda(x)}$ for $0<\varepsilon<1$.

Especially, this implies that $\mathcal{M}\left[ u_\lambda \right] (\xi)
= \int_0^\infty x^{\xi-1} u_\lambda(x) \dd x$ is well defined on the strip $0 < \mathop{Re} (\xi) <1+\rho$.
\end{lem}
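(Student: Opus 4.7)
The plan is to combine elementary bounds coming from the scaling identity $U^{\star,x}\overset{d}{=} x\,U^{\star,1}$ (established in the sketch of Theorem~\ref{T:law_U_diff_sign}), which gives the representation $u_\lambda(x)=\mathbb{E}[e^{-\lambda x U^{\star,1}}]$ with $U^{\star,1}\in[0,1/\beta_1]$, together with a sharp tail estimate for $U^{\star,1}$ at $0^+$. Differentiating under the expectation yields $u_\lambda^{(k)}(x)=(-\lambda)^k \mathbb{E}[(U^{\star,1})^k e^{-\lambda x U^{\star,1}}]$, and the three bounds $|u_\lambda|\le 1$, $x|u_\lambda'(x)|\le C$ and $x^2|u_\lambda''(x)|\le C$ follow at once from the fact that $t\mapsto t^k e^{-t}$ is bounded on $[0,\infty)$: indeed, $x^k|u_\lambda^{(k)}(x)|=\lambda^{-k}\mathbb{E}\bigl[(\lambda x U^{\star,1})^k e^{-\lambda x U^{\star,1}}\bigr]\le \lambda^{-k}\sup_{t\ge 0} t^k e^{-t}$.

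The heart of the argument is the tail estimate $\phi(t):=\mathbb{P}(U^{\star,1}\le t)\le C_\rho\, t^{1+\rho}$ for every $t\ge 0$ and $\rho\in (0,(1-\gamma)\wedge 1)$. I would establish it by conditioning on the first jump time $T_1$ of $Z^1$ and on the post-jump value. A change of variable $a\mapsto v=Z^1_{T_1}/Z^1_{T_1-}$ in the conditional jump law shows that, given $T_1$, the ratio $v$ has density $(1-\gamma)v^{-\gamma}\mathbf{1}_{(0,1)}(v)\,dv$, independent of $T_1$. The strong Markov property combined with the scaling identity then yields an integral equation of the form
\[
\phi(t)=\int_0^{t\wedge 1/\beta_1} f_{T_1}(s)\,ds\int_0^1 \phi\!\left(\frac{t-s}{(1-\beta_1 s)v}\right)(1-\gamma)v^{-\gamma}\,dv,
\]
where $f_{T_1}$ (the density of $T_1$, inherited from \eqref{E:lawU11}) is bounded near $s=0$. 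A first iteration from the trivial $\phi\le 1$ gives the crude bound $\phi(t)\le C t$. A second iteration uses $\phi(u)\le 1\wedge Cu$ and splits the inner $v$-integral at a cutoff $v_0\asymp (t-s)$: on $\{v\le v_0\}$ one applies $\phi\le 1$ while on $\{v\ge v_0\}$ one uses the linear bound. Both pieces contribute of order $(t-s)^{1-\gamma}$, so that integration in $s$ produces $\phi(t)\le C t^{2-\gamma}$, which is stronger than needed since $1+\rho<2-\gamma$.

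Given this tail estimate, an integration by parts in $u_\lambda(x)=\int_0^{1/\beta_1} e^{-\lambda x t}\,dF(t)$ (with $F=\phi$ the cumulative distribution function of $U^{\star,1}$) produces $u_\lambda(x)=\lambda x\int_0^{1/\beta_1} e^{-\lambda x t}\,\phi(t)\,dt\le C_\rho\,x^{-(1+\rho)}$ after the substitution $s=\lambda x t$. Applying the same scheme to $u_\lambda^{(k)}(x)=(-\lambda)^k\int t^k e^{-\lambda x t}\,dF(t)$ for $k=1,2$, integrating by parts and again inserting $\phi(t)\le C t^{1+\rho}$, yields $|u_\lambda'(x)|\le C x^{-(2+\rho)}$ and $|u_\lambda''(x)|\le C x^{-(3+\rho)}$, which subsumes the weaker claim on $x^{3-\varepsilon}|u_\lambda''(x)|$. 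Combined with the trivial bounds near $x=0$ (namely $u_\lambda\le 1$ and $|u_\lambda^{(k)}|\le (\lambda/\beta_1)^k$), these imply the boundedness of all six listed functions on $[0,\infty)$. Integrability of $x^{\xi-1}u_\lambda(x)$ near $0$ (which requires $\mathop{Re}(\xi)>0$) and near $\infty$ (which requires $\mathop{Re}(\xi)<1+\rho$) then establishes the Mellin strip.

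The main obstacle lies in the iteration producing $\phi(t)\le Ct^{2-\gamma}$: a naive substitution of the linear bound $\phi(u)\le Cu$ alone produces the divergent integral $\int_0^1 v^{-\gamma-1}\,dv=\infty$ when $\gamma\in(0,1)$. The cutoff at $v_0\asymp (t-s)$, which combines $\phi\le 1$ for small $v$ with the linear bound for large $v$, is the key device that restores integrability and simultaneously produces the sharp exponent $1-\gamma$ in $(t-s)$.
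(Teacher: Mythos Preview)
Your argument is correct and takes a genuinely different route from the paper. Both proofs begin identically: the scaling identity gives $u_\lambda^{(k)}(x)=(-\lambda)^k\mathbb{E}[(U^{\star,1})^k e^{-\lambda x U^{\star,1}}]$, and the boundedness of $t\mapsto t^k e^{-t}$ immediately yields the first three bounds. The divergence is in how the sharper decay exponents are obtained. The paper first uses only the crude inequality $U^{\star,1}\ge T_1$ (the first jump time, whose law \eqref{E:lawU11} gives $\mathbb{E}[(T_1)^{-1+\varepsilon}]<\infty$) to deduce $|u_\lambda^{(k)}(x)|\le c\,x^{-(k+1-\varepsilon)}$; this already settles the $x^{3-\varepsilon}|u_\lambda''|$ bound, but is one power short for $u_\lambda$ and $u_\lambda'$. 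The paper then \emph{bootstraps analytically} through the Dynkin equation $Au_\lambda=\lambda u_\lambda$: writing the integral operator as $\frac{|\kappa|}{x}\int_0^1 a^{-\gamma}u_\lambda(xa)\,\dd a$ and splitting at $a=1/x$ recovers the sharp exponent $\gamma-2$.

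You instead \emph{bootstrap probabilistically}, staying at the level of the distribution function $\phi(t)=\mathbb{P}(U^{\star,1}\le t)$: the first-jump decomposition combined with scaling gives a renewal-type equation for $\phi$, and two iterations (with the cutoff at $v_0\asymp t-s$ to cure the divergence of $\int_0^1 v^{-\gamma-1}\,\dd v$ when $\gamma\in[0,1)$) produce $\phi(t)\le C t^{2-\gamma}$. This tail bound is strictly stronger than the paper's moment bound and feeds directly into the Laplace representation via integration by parts, yielding all the decay rates at once without ever invoking the Dynkin equation. Your approach is more self-contained and actually delivers a bit more (e.g.\ $x^{3+\rho}|u_\lambda''|$ bounded rather than merely $x^{3-\varepsilon}|u_\lambda''|$), while the paper's route has the virtue of reusing machinery (the equation $Au_\lambda=\lambda u_\lambda$) already established for other purposes. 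One small imprecision: for $\gamma\le 0$ the two pieces of your split integral are $O(t-s)$ rather than $O((t-s)^{1-\gamma})$, but since in that regime the constraint is $\rho<1$, the resulting bound $\phi(t)\le Ct^2$ is still more than sufficient.
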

\begin{proof}
We use the notations of the proof of Lemma \ref{L:Dynkin}. We have $u_\lambda(x)=\mathbb{E}[e^{-\lambda U^{\star,x}}]
=\mathbb{E}[e^{-\lambda x U^{\star,1}}]$, and thus, $u_\lambda^{(k)}(x)=
\mathbb{E}[(-\lambda U^{\star,1} )^k e^{-\lambda x U^{\star,1}}]$ for $k \ge 0$. This clearly implies that $x^{k} \abs{ u_\lambda^{(k)}(x)}$ is a bounded function. For $k=0,1,2$, we get that the first three functions in the statement of the lemma are bounded.

Remark now that $U^{\star,1}$ is almost surely greater than $U^1_1$, the first jump time of the process
$u \mapsto Z^x_u$. The law of $U^1_1$ is given by \eqref{E:lawU11} and one can easily check that
$E[ (U^1_1)^{-1+\varepsilon} ]<\infty$ for $\varepsilon>0$. We deduce that
$E[ (U^{\star,1})^{-1+\varepsilon} ]<\infty$ for $\varepsilon>0$.
As a consequence,
\begin{align}\nonumber
\abs{u^{(k)}_\lambda(x)}
&=\abs{ \mathbb{E}[(-\lambda U^{\star,1} )^k e^{-\lambda x U^{\star,1}}] } \le  \mathbb{E}
\left[
\frac{1}{(U^{\star,1})^{1-\varepsilon}}
\frac{( \lambda x U^{\star,1})^{k+1-\varepsilon}}{\lambda^{1-\varepsilon} x^{k+1-\varepsilon}}
e^{-\lambda x U^{\star,1}}
\right]
\\ \label{E:controle_u_infty}
& \le c x^{-(k+1-\varepsilon)} E[ (U^{\star,1})^{-1+\varepsilon} ]
\le c x^{-(k+1-\varepsilon)} 
\end{align}
for some constant $c$ independent of $x$.
Using $k=2$, this shows that $x^{3-\varepsilon} \abs{u''_\lambda(x)}$ is bounded.

It remains to prove the boundedness of $x \mapsto x^{1+\rho} u_\lambda(x)$ and
$x \mapsto x^{2+\rho} \abs{u'_\lambda(x)}$. Clearly, only a control for large values of $x$ is needed. 
However, this control requires some additional work. 

We start by proving that $u_\lambda(x) \le c x^{-1-\rho}$ for $x>1$. Using Dynkin's equation $\lambda u_\lambda(x)=A u_\lambda(x)$, we have
\begin{align} \nonumber
\lambda u_\lambda(x)
&=-\beta_1 u'_\lambda(x)
+ 
\int_0^x \frac{\abs{\kappa}}{x^2} \left( 1-\frac{a}{x}\right)^{-\gamma} [u_\lambda(x-a)-u_\lambda(x)] \dd a
\\
&= \label{E:Dynkin_beta_neg}
-\beta_1 u'_\lambda(x)
-\frac{\abs{\kappa} u_\lambda(x)}{x} \int_0^1 a^{-\gamma} \dd a + 
\int_0^1 \frac{\abs{\kappa}}{x} a^{-\gamma} u_\lambda(x a) \dd a
\end{align}
where we performed a change of variables in the last line. 
From \eqref{E:controle_u_infty} with $k=0$ and $k=1$, we see that the first two terms in the right hand side 
of \eqref{E:Dynkin_beta_neg} are bounded by $c x^{-2+\varepsilon}=O(x^{-1-\rho})$ if $\varepsilon$ is small enough. 
It remains to control the last term in \eqref{E:Dynkin_beta_neg}. We split the integral  $\int_0^1 \frac{\abs{\kappa}}{x} a^{-\gamma} u_\lambda(x a) \dd a$ into
\begin{equation*}
\frac{\abs{\kappa}}{x} \int_0^{1/x}  a^{-\gamma} u_\lambda(x a) \dd a
+
 \frac{\abs{\kappa}}{x} \int_{1/x}^1 a^{-\gamma} u_\lambda(x a) \dd a.
\end{equation*}
Using the control $u_\lambda(xa) \le 1$ on the first integral and $u_\lambda(xa) \le c (xa)^{-1+\varepsilon}$ on the second one, we get 
$\int_0^1 \frac{\abs{\kappa}}{x} a^{-\gamma} u_\lambda(x a) \dd a \le c x^{\gamma-2} + c x^{-2+\varepsilon}
\le c x^{-1-\rho}$. Collecting all terms, we have shown $u_\lambda(x) \le c x^{-1-\rho}$.

To finish the proof of the lemma, we need to establish $\abs{u'_\lambda(x)} \le c x^{-2-\rho}$. 
If one differentiates the relation  \eqref{E:Dynkin_beta_neg} and uses \eqref{E:controle_u_infty}, for $k=1$ and $k=2$,
and the control already obtained for
$\int_0^1 a^{-\gamma} u_\lambda(xa) \dd a$,
it can be shown,
\begin{equation*}
\lambda u'_\lambda(x)=\frac{\abs{\kappa}}{x} \int_0^1 a^{-\gamma+1} u'_\lambda(xa) \dd a  +O(x^{-2-\rho}).
\end{equation*}
Again the integral above can be splitted into $\int_0^{1/x} a^{-\gamma+1} u'_\lambda(xa)\dd a+\int_{1/x}^1 a^{-\gamma+1} u'_\lambda(xa)\dd a $. Using the control $\abs{u'_\lambda(xa)} \le c (xa)^{-1}$ on the first part, and 
$\abs{u'_\lambda(xa)} \le c (xa)^{-2+\varepsilon}$ on the second part, we get
\begin{equation*}
\frac{1}{x} \int_0^1 a^{-\gamma+1} u'_\lambda(xa) \dd a \le c x^{-3+ \gamma}+c x^{-3+\varepsilon}
\le c x ^{-(2+\rho)}.
\end{equation*}
Collecting all terms, we have shown $\abs{u'_\lambda(x)} \le c x^{-2-\rho}$.
\end{proof}

\bibliographystyle{plain}

\begin{thebibliography}{10}

\bibitem{AbrSte_Book}
Milton Abramowitz and Irene~A. Stegun.
\newblock {\em Handbook of mathematical functions with formulas, graphs, and
  mathematical tables}, volume~55 of {\em National Bureau of Standards Applied
  Mathematics Series}.
\newblock For sale by the Superintendent of Documents, U.S. Government Printing
  Office, Washington, D.C., tenth printing edition, 1972.

\bibitem{BarBur_etal01}
Martin Barlow, Krzysztof Burdzy, Haya Kaspi, and Avi Mandelbaum.
\newblock Coalescence of skew {B}rownian motions.
\newblock In {\em S\'eminaire de {P}robabilit\'es, {XXXV}}, volume 1755 of {\em
  Lecture Notes in Math.}, pages 202--205. Springer, Berlin, 2001.

\bibitem{BasChe05}
Richard~F. Bass and Zhen-Qing Chen.
\newblock One-dimensional stochastic differential equations with singular and
  degenerate coefficients.
\newblock {\em Sankhy\=a}, 67(1):19--45, 2005.

\bibitem{Burdzy09}
Krzysztof Burdzy.
\newblock Differentiability of stochastic flow of reflected {B}rownian motions.
\newblock {\em Electron. J. Probab.}, 14:no. 75, 2182--2240, 2009.

\bibitem{BurChe01}
Krzysztof Burdzy and Zhen-Qing Chen.
\newblock Local time flow related to skew {B}rownian motion.
\newblock {\em Ann. Probab.}, 29(4):1693--1715, 2001.

\bibitem{BurCheJon06}
Krzysztof Burdzy, Zhen-Qing Chen, and Peter Jones.
\newblock Synchronous couplings of reflected {B}rownian motions in smooth
  domains.
\newblock {\em Illinois J. Math.}, 50(1-4):189--268 (electronic), 2006.

\bibitem{CheLeeShe07}
Yu-Ting Chen, Cheng-Few Lee, and Yuan-Chung Sheu.
\newblock An {ODE} approach for the expected discounted penalty at ruin in a
  jump-diffusion model.
\newblock {\em Finance Stoch.}, 11(3):323--355, 2007.

\bibitem{CraLej89}
M.~Cranston and Y.~Le~Jan.
\newblock On the noncoalescence of a two point {B}rownian motion reflecting on
  a circle.
\newblock {\em Ann. Inst. H. Poincar\'e Probab. Statist.}, 25(2):99--107, 1989.

\bibitem{Elliot01}
Robert~J. Elliott.
\newblock A continuous time {K}ronecker's lemma and martingale convergence.
\newblock {\em Stochastic Anal. Appl.}, 19(3):433--437, 2001.

\bibitem{HarShe81}
J.~M. Harrison and L.~A. Shepp.
\newblock On skew {B}rownian motion.
\newblock {\em Ann. Probab.}, 9(2):309--313, 1981.

\bibitem{JacShi_Book}
Jean Jacod and Albert~N. Shiryaev.
\newblock {\em Limit theorems for stochastic processes}, volume 288 of {\em
  Grundlehren der Mathematischen Wissenschaften [Fundamental Principles of
  Mathematical Sciences]}.
\newblock Springer-Verlag, Berlin, 1987.

\bibitem{Lejay06}
Antoine Lejay.
\newblock On the constructions of the skew {B}rownian motion.
\newblock {\em Probab. Surv.}, 3:413--466 (electronic), 2006.

\bibitem{RevYor}
Daniel Revuz and Marc Yor.
\newblock {\em Continuous martingales and {B}rownian motion}, volume 293 of
  {\em Grundlehren der Mathematischen Wissenschaften [Fundamental Principles of
  Mathematical Sciences]}.
\newblock Springer-Verlag, Berlin, third edition, 1999.

\end{thebibliography}





\end{document}